\newtheorem{theorem}{Theorem}[section]
\newtheorem{proposition}[theorem]{Proposition}
\newtheorem{lemma}[theorem]{Lemma}
\newtheorem{claim}{Claim}
\newtheorem{question}[theorem]{Question}
\theoremstyle{definition}
\newtheorem{definition}[theorem]{Definition}
\newtheorem{example}[theorem]{Example}
\theoremstyle{remark}
\newtheorem{remark}[theorem]{Remark}
\numberwithin{equation}{section}
\newcommand{\F}{\ensuremath{\mathcal{F}}}
\newcommand{\rank}{\ensuremath{\mathrm{rank}\ }}
\newcommand{\RR}{\mathbb R}
\newcommand{\zermelo}{\ensuremath{\mathtt{Z}}}
\newcommand{\metriczermelo}{\ensuremath{ \mathtt{h}   } }
\newcommand{\btransnormal}{\ensuremath{\mathfrak{b}}}
\begin{document}


\title[Finsler transnormal functions]{On Finsler  transnormal functions }

\author[M. M. Alexandrino] {Marcos M. Alexandrino}

\author[B. O. Alves]{Benigno O. Alves}

\author[H. R. Dehkordi]{Hengameh R. Dehkordi  }

\address{Marcos M. Alexandrino and  Hengameh R. Dehkordi \\ 
Instituto de Matem\'{a}tica e Estat\'{\i}stica\\
Universidade de S\~{a}o Paulo \\
 Rua do Mat\~{a}o 1010, 05508 090 S\~{a}o Paulo, Brazil}
\email{m.alexandrino@usp.br}
\email{hengamehraeesi@gmail.com}

\address{Benigno O. Alves\\
Departamento de Matem\'{a}tica CCET-  UFMA \\ 
Cidade Universit\'{a}ria Dom Delgado\\
Av. dos Portugueses, 1966, Bacanga - CEP 65080-805
S\~{a}o Lu\'{\i}s - MA, Brazil}
\email{gguialves@hotmail.com}

\thanks{
The first author was supported by Funda\c{c}\~{a}o de Amparo a Pesquisa
do Estado de S\~{a}o Paulo-FAPESP (Tematico 2016/23746-6). The second and 
third authors were supported by CNPq and Capes (PhD fellowships). }

\subjclass[2000]{Primary 53C12, Secondary 58B20}


\keywords{Finsler foliations, transnormal functions,  Morse-Bott functions}

\begin{abstract}

In this note we discuss a few  properties of transnormal Finsler functions, i.e., the 
natural generalization of distance functions and isoparametric Finsler functions.
In particular, we prove that critical level sets of an analytic transnormal function are submanifolds,
and the partition of $M$ into level sets is a Finsler partition, when the function is defined on
a compact analytic manifold $M$.

\end{abstract}


\maketitle

\section{Introduction}

Let $(M,F)$ be a  forward complete Finsler manifold.  
A function $f:M\to \mathbb{R}$ 
is called $F$-\emph{transnormal function} if
$F(\nabla f)^{2}= \btransnormal(f)$ for some continuous function $\btransnormal$. 

Transnormal functions on Riemannian manifolds have been focus of researchers in the last decades. 
In particular, if $\btransnormal \in C^{2}(f(M))$  then the level sets of $f$ are leaves of the so called singular Riemannian
foliation  and the regular level sets are equifocal hypersurfaces; see e.g, \cite{Th}, \cite{QMWang} and  \cite[Chapter 5]{AlexBettiol}.

In Finsler geometry, the study of transnormal functions has just begun, see \cite{He-Yin-Shen}  but there are already some
interesting applications in wildfire modeling, see \cite{Markvorsen}.

The most natural example of a transnormal function on a Finsler space is the distance function on a Minkowski space. More precisely consider
a Randers Minkowski space $(V,\zermelo)$  and define $f(x):=d(0,x)$. 
It is well known that in this example $\btransnormal=1$, see \cite[Lemma 3.2.3]{Zhongmin-Shen}. 
And already here one can see a phenomenon that does not exist in the Riemannian case. 
The regular level set $f^{-1}(c_{1})$ is  forward parallel  to $f^{-1}(c_{2})$ if $c_{1}< c_{2}$ but
 $f^{-1}(c_{2})$ is not forward parallel to $f^{-1}(c_{1})$  and 
hence the partition $\F=\{ f^{-1}(c) \}_{c\in (0,\infty)}$ is not a  Finsler partition of $V\setminus 0$, 
recall basic definitions and examples in Sections \ref{section-basic-definitions} and  \ref{section-examples} respectively.

The above  observation leads  us to the 3 natural questions  we want to address here.  

\begin{question}
\label{question-1}
If $f$ is a transnormal function  on a forward complete Finsler manifold $(M,F),$ 
is the (regular) level set    $f^{-1}(c)$ forward parallel to the (regular) level set  $f^{-1}(d)$ when $c<d$? In this case, is
 the distance from $f^{-1}(c)$  to $f^{-1}(d)$   described by $\int_{c}^{d}\frac{1}{\sqrt{\btransnormal(s)}} ds$
as it was in the Riemannian case?
\end{question}

A positive answer to Question \ref{question-1} has already been given at \cite{He-Yin-Shen}. 
In Section \ref{section-question1} we review this fact, see Proposition \ref{proposition-forward-equidistant}. 

\begin{question}
\label{question-2}
Under which conditions the level sets of a transnormal  function are parallel to each other, i.e.,
 $\F=\{ f^{-1}(c) \}$  is a Finsler partition? 
\end{question}

As we have remarked before, level sets of transnormal functions, do not need to be equidistant, and hence some hypothesis is needed to assure equidistance between the level sets, i.e, that the  partition $\F=\{ f^{-1}(c) \}_{c\in f(M)}$ is a Finsler partition. 
The next result approaches  Question \ref{question-2} and will be discussed in Section \ref{section-question2}.  

\begin{theorem}
\label{finsler-partition}
Let $(M,F)$ be a connected, compact analytic Finsler manifold 
	and $f:M\to \mathbb{R}$ a $F$-transnormal and analytic function with $f(M)=[a,b]$. 
	Suppose that the level sets are connected and $a$ and $b$ are the only singular values at $[a,b]$. Then
	\begin{enumerate}
	\item[(a)] the critical level sets $f^{-1}(a)$ and $f^{-1}(b)$ are submanifolds. 
	\item[(b)]  The level sets are equidistant to each other, i.e, $\F=\{ f^{-1}(c) \}_{c\in [a,b]}$ is a Finsler partition. In particular for each regular value $c$, $f^{-1}(c)$ is a future and past cylinder over each singular level set.
	\end{enumerate}	
\end{theorem}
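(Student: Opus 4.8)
The plan is to deduce both statements from analyticity plus the transnormality equation $F(\nabla f)^2=\btransnormal(f)$, using the known Riemannian-type picture near regular level sets (Proposition \ref{proposition-forward-equidistant}) and then controlling what happens as one approaches the singular values $a$ and $b$. For part (a), I would first observe that, since $f$ is analytic and $M$ is compact and analytic, each level set $f^{-1}(c)$ is an analytic set, and the critical set of $f$ is an analytic subvariety of $M$ contained in $f^{-1}(a)\cup f^{-1}(b)$. The key is to show that $f^{-1}(a)$ (and symmetrically $f^{-1}(b)$) is a smooth submanifold. I would do this by flowing: pick a regular value $c$ close to $a$, take the (forward, or rather backward) normal geodesic flow emanating from $f^{-1}(c)$ in the direction of decreasing $f$, and use the transnormal equation to show that all these geodesics reach $f^{-1}(a)$ at the same parameter value $\ell=\int_a^c \btransnormal(s)^{-1/2}\,ds$ (this is the Finsler analogue of the computation that the integral curves of $\nabla f/F(\nabla f)^2$ are unit-speed geodesics, which is exactly the content used in Question \ref{question-1}). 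Thus $f^{-1}(a)$ is the image of the normal exponential map of $f^{-1}(c)$ at time $\ell$; analyticity of $F$ and $f$ then forces the focal structure to be uniform, so the ``soul'' $f^{-1}(a)$ is an embedded analytic submanifold, and $f^{-1}(c)$ is a sphere/cylinder bundle over it.

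For part (b), once (a) is in hand, the statement that $\F=\{f^{-1}(c)\}$ is a Finsler partition means every leaf is forward parallel to every other leaf and conversely. On the regular stratum $f^{-1}((a,b))$ this is essentially Proposition \ref{proposition-forward-equidistant} applied in both directions: for $c<d$ both regular, $f^{-1}(c)$ is forward parallel to $f^{-1}(d)$ with forward distance $\int_c^d\btransnormal(s)^{-1/2}ds$, and I would argue that here, unlike the Minkowski distance-function example, the reverse parallelism also holds because $f$ is globally defined on a compact manifold and $-f$ is again a transnormal function (with $\btransnormal(-s)$) whose regular level sets are the same hypersurfaces; applying the Proposition to $-f$ gives that $f^{-1}(d)$ is forward parallel to $f^{-1}(c)$ as well. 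The remaining point is to include the two singular leaves: I would use the geodesic flow from part (a) to see that $f^{-1}(a)$ is forward parallel to every $f^{-1}(c)$ and vice versa (the normal geodesics from $f^{-1}(a)$ realize the distance and sweep out each $f^{-1}(c)$), and likewise for $f^{-1}(b)$; the ``future and past cylinder'' description of a regular $f^{-1}(c)$ over each singular level set then falls out of the normal exponential map picture established in (a).

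The main obstacle I anticipate is the regularity of the singular level sets in part (a): in the Finsler setting the normal exponential map of a hypersurface is only as smooth as $F$ away from the zero section, and one must rule out the focal locus being badly behaved — e.g., $f^{-1}(a)$ could a priori be a stratified set rather than a manifold. This is where analyticity is essential: I would invoke that the set of points where the rank of $df$ drops is analytic, that an analytic transnormal function cannot have a ``corner'' in its level set (one can expand $f$ in normal coordinates and use $F(\nabla f)^2=\btransnormal(f)$ to constrain the Hessian), and that the forward/backward geodesic flow is analytic on the regular region, so the limit submanifold inherits analyticity. A secondary subtlety is that forward parallelism in Finsler geometry is not symmetric, so in part (b) one must be careful to prove both ``$f^{-1}(c)$ forward parallel to $f^{-1}(d)$'' and its reverse, which is precisely why I pass to $-f$; one should check that the compactness hypothesis and the equidistance formula are compatible across the singular values, i.e. that the total forward distance from $f^{-1}(a)$ to $f^{-1}(b)$ computed along decreasing-$f$ geodesics equals $\int_a^b\btransnormal(s)^{-1/2}ds$ and is independent of the chosen geodesic.
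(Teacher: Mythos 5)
Your proposal has two genuine gaps, one in each part.

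For part (a), the step ``analyticity of $F$ and $f$ then forces the focal structure to be uniform, so $f^{-1}(a)$ is an embedded analytic submanifold'' is precisely the hard point of the theorem, and analyticity alone does not deliver it: an analytic level set can perfectly well be a stratified set with singularities (think of $xy=0$), and the multiplicity of focal points of an analytic hypersurface along its normal geodesics can jump from point to point. What is actually needed is an argument that the endpoint map $\eta_{r\xi}\colon f^{-1}(c)\to f^{-1}(a)$ has \emph{constant rank}. The paper gets this by first using Lojasiewicz's stratification theorem to extract a stratum of $f^{-1}(a)$ of locally maximal dimension, then combining two semicontinuities: the Morse index theorem gives $m(\gamma_p)\le m(\gamma_x)$ for $x$ near $p$, while $\dim\ker d(\eta_{r\xi})_p\ge\dim\ker d(\eta_{r\xi})_x$; since all focal points are concentrated at the single parameter $r=r^-_c$ (which itself must be proved), these squeeze to give constant rank, and the rank theorem plus connectedness then yields the submanifold structure. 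Your proposal contains no substitute for this mechanism.

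For part (b), the claim that $-f$ is again transnormal is false in the Finsler setting, and this is exactly the phenomenon the paper's introductory example is designed to exhibit. Since the Legendre transform $\mathcal{L}(v)=g_v(v,\cdot)$ is not odd, $\nabla(-f)\neq-\nabla f$ and $F(\nabla(-f))$ need not be constant on level sets of $f$ even when $F(\nabla f)$ is: for the distance function on a Randers--Minkowski space one computes $Z(\nabla(-f))=1-2\,df(W)$, which varies along each sphere, and correspondingly the level sets are forward but not backward parallel. So deducing backward parallelism on the regular part by ``applying Proposition~\ref{proposition-forward-equidistant} to $-f$'' does not work; transnormality of $-f$ is essentially equivalent to the conclusion you are trying to prove, so the argument is circular. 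The correct route (the one the paper takes) is to establish backward equidistance through the singular leaves: one shows that each regular level set is simultaneously a past cylinder and (locally) a future cylinder over the singular level set $f^{-1}(b)$, invokes Lemma~\ref{lemma-equidistant} to get the Finsler property on a neighborhood of $f^{-1}(b)$, and only then uses analyticity to propagate this property across the whole of $M$ via a chain of overlapping tubular neighborhoods down to $f^{-1}(a)$. Compactness enters by guaranteeing the singular leaves exist, not by making $-f$ transnormal.
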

\begin{remark}
It follows from the above theorem, that the regular level set $f^{-1}(c)$ are \emph{equifocal hypersurface}, 
recall definition at \cite{AlexAlvesJavaloyes}. 
\end{remark}

Finally, inspired by \cite{AlexAlvesJavaloyes} it is also natural to ask.

\begin{question}
\label{question-3}
Under which conditions the level sets of a transnormal function on a Finsler manifold are level sets of
singular Riemannian foliation for some Riemannian metric? 

\end{question}

In section \ref{section-question3} we approach this  question 
by using the  result  of C. Qian, Z. Tang \cite{qian2015isoparametric} about Morse-Bott functions.

\

\emph{Acknowledgements}
We thank prof. Miguel Angel Javaloyes for useful suggestions. This note is based on part of H.~ R.~ Dehkordi's PhD \cite{Dehkordi}.

\section{Preliminaries}
\label{section-basic-definitions}

In this section we fix some notations and briefly review a few facts
about Finsler geometry and Finsler partitions which will be used in this note. For more
details see  \cite{Zhongmin-Shen},  \cite{AlexAlvesJavaloyes} and \cite{AlvesJavaloyes}.

\subsection{Finsler metrics}

Let $V$ be a vector space and $F:V\rightarrow [0,+\infty)$ a function. 
We say that $F$ is a {\it Minkowski norm} and $(V,F)$ is a \emph{Minkowski space} if:
\begin{enumerate}
\item[(a)] $F$ is smooth on $V \setminus  \{0 \}$,
\item[(b)] $F$ is positive homogeneous of degree 1, that is 
$F(\lambda v)=\lambda\, F(v)$ for
every $v\in V$ and $\lambda>0$,
\item[(c)] for every $v\in V \setminus \{ 0\}$, the \emph{fundamental tensor} of $F$ 
defined as
\begin{equation}
\label{fundtensor}
 g_{v}(u,w)=\frac{1}{2}\frac{\partial^{2}}{\partial t\partial  s} F^{2}(v+tu+sw)|_{t=s=0}
 \end{equation}
for any $u,w\in V$ is a positive-definite bilinear  symmetric form. 
\end{enumerate} 
 Now let us consider  a manifold $M$.  
We say that a  function $F:TM\to [0,+\infty)$ is a \emph{Finsler metric} 
if  $F$ is smooth on $TM\setminus \bf 0$,
and for every $p\in M$, $F_p=F|_{T_pM}$ is a Minkowski norm on $T_pM$.

\begin{lemma}
\label{lemma-properties-fundamental-tensor}
The fundamental tensor $g_v$ fulfills the following properties:
\begin{enumerate}
\item[(a)] $g_{\lambda v}= g_{v}$ for $\lambda>0$ 
\item[(b)] $g_{v}(v,v)=F^{2}(v)$.
\item[(c)] $g_{v}(v,u)=\frac{1}{2} \frac{\partial}{\partial z} F^{2}(v+ zu)|_{z=0}.$
\end{enumerate}
\end{lemma}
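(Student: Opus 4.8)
The plan is to prove the three properties directly from the definition of the fundamental tensor in \eqref{fundtensor}, using the positive homogeneity of $F$ (property (b) of a Minkowski norm). The central tool throughout will be Euler's theorem on homogeneous functions, together with the observation that $F^{2}$ is positive homogeneous of degree $2$, i.e. $F^{2}(\lambda v)=\lambda^{2}F^{2}(v)$ for $\lambda>0$. Each item is a short computation, so the main work is organizing the differentiations carefully; I expect no serious obstacle, only the bookkeeping of mixed partials and the need to justify interchanging differentiation with the homogeneity scaling.

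For part (a), I would show $g_{\lambda v}=g_{v}$ for $\lambda>0$ by substituting $\lambda v$ into \eqref{fundtensor} and exploiting degree-$2$ homogeneity of $F^{2}$. Writing out
\[
g_{\lambda v}(u,w)=\frac{1}{2}\frac{\partial^{2}}{\partial t\partial s}F^{2}(\lambda v+tu+sw)\Big|_{t=s=0},
\]
I would make the change of variables $t=\lambda\tilde t$, $s=\lambda\tilde s$, so that $\lambda v+tu+sw=\lambda(v+\tilde t\,u+\tilde s\,w)$ and hence $F^{2}(\lambda v+tu+sw)=\lambda^{2}F^{2}(v+\tilde t\,u+\tilde s\,w)$. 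The two factors of $\lambda$ coming from $\partial/\partial t=\lambda^{-1}\partial/\partial\tilde t$ and $\partial/\partial s=\lambda^{-1}\partial/\partial\tilde s$ cancel the $\lambda^{2}$, yielding $g_{\lambda v}=g_{v}$.

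For part (b), I would specialize $u=w=v$ in \eqref{fundtensor} and reduce to a single-variable homogeneity computation. Setting $\varphi(r):=F^{2}(v+rv)=F^{2}((1+r)v)=(1+r)^{2}F^{2}(v)$ by degree-$2$ homogeneity, the mixed partial in $t,s$ at $0$ along the diagonal becomes $\tfrac12\varphi''(0)$; since $\varphi''(r)=2F^{2}(v)$, this gives $g_{v}(v,v)=F^{2}(v)$. For part (c), I would fix $w=v$ while keeping $u$ arbitrary and relate the mixed partial to a first derivative. Concretely, the inner $s$-derivative of $F^{2}(v+tu+sv)$ evaluated along the homogeneity direction recovers an Euler-type identity; the cleanest route is to apply Euler's theorem to $g_{v}$ viewed as the Hessian of the degree-$2$ function $F^{2}$, which gives $g_{v}(v,u)=\tfrac12\,\partial_{z}F^{2}(v+zu)|_{z=0}$. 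The only point requiring a word of care is that all derivatives are taken at a point where $F$ is smooth, i.e. $v\neq 0$, so that \eqref{fundtensor} is well defined; this is exactly the hypothesis under which $g_{v}$ was introduced.
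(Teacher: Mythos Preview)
The paper does not supply a proof of this lemma; it is stated as a standard fact about the fundamental tensor and used without justification. Your argument via the degree-$2$ homogeneity of $F^{2}$ and Euler's theorem is the standard one and is correct, so there is nothing to compare against. One small remark: in part (c) your phrasing is a bit elliptical; the clean statement is that since $F^{2}$ is positively homogeneous of degree $2$, its Hessian at $v$ satisfies $\sum_{i}v^{i}\partial_{i}\partial_{j}F^{2}(v)=\partial_{j}F^{2}(v)$ by Euler's relation applied to the degree-$1$ functions $\partial_{j}F^{2}$, and contracting with $u^{j}$ gives exactly the desired identity.
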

Item (c) above implies that the  Legendre
 transformation  $\mathcal{L}:TM\setminus{\bf 0}\mapsto TM^{*}\setminus{\bf 0}$ 
associated with   $\frac{1}{2}F^{2}$ can be computed as $\mathcal{L}(v)= g_{v}(v,\cdot).$
Now for a Finsler manifold $(M,F)$ and a  
smooth function $f:M\to \mathbb{R}$  we can define $\nabla f$ (the \emph{gradient with respect to $F$}) 
as $\nabla f=\mathcal{L}^{-1}df,$ i.e.,  $df (\cdot)=g_{\nabla f}(\nabla f, \cdot).$ 

 A smooth function $f:M\rightarrow \RR$ on a Finsler manifold $(M,F)$ 
is called $F$-{\it transnormal function} if  there exists another continuous  real valued function 
$\btransnormal :f(M)\rightarrow \RR$ such that  $F(\nabla f)^2=\btransnormal (f)$.

We also need to recall the definition of Chern connection, Cartan tensor and geodesics. 

\begin{lemma}[Chern's connection]
Given a  vector field $V$ without singularities on an open set $U\subset M$ there exists a unique affine connection $\nabla^{V}$ on $U$
that satisfies the following properties:
\begin{enumerate}
\item $ \nabla^{V}_{X}Y-\nabla^{V}_{Y}X=[X,Y]$
for every vector field $X$ and $Y$ on $U$,
\item $X\cdot g_{V}(Y,Z)=g_{V}(\nabla^{V}_{X}Y,Z)+g_{V}(Y,\nabla^{V}_{X}Z)+2C_{V}(\nabla^{V}_{X}V,Y,Z), $
\end{enumerate}
where $X$, $Y$, and $Z$ are vector fields on $U$  and $C_{V}$ is the Cartan tensor, i.e.,
$$C_{v}(w_{1},w_{2},w_{3}):= \frac{1}{4} \frac{\partial^{3}}{\partial s_{3}\partial s_{2}\partial s_{1}} F^{2}(v+ \sum_{i=1}^{3} s_{i}w_{i})|_{s_{1}=s_{2}=s_{3}=0}$$ 
for $v\in TM \setminus{\bf 0}$ and $w_{i}\in T_{\pi(v)}M,$ where $\pi:TM\to M$ is the canonical projection. 
\end{lemma}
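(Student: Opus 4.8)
The plan is to reduce the statement to the fundamental theorem of Riemannian geometry applied to an auxiliary metric, and then to solve a purely linear, fibrewise equation for the correction term. Since $V$ has no singularities on $U$, the assignment $p\mapsto g_{V(p)}$ is a genuine smooth Riemannian metric on $U$; let $D$ denote its Levi--Civita connection, and set $n=\dim M$. Any affine connection on $U$ can be written as $\nabla^{V}=D+A$, where $A_{X}Y:=\nabla^{V}_{X}Y-D_{X}Y$ is a $(2,1)$-tensor (the difference of two connections is $C^{\infty}(U)$-bilinear), and conversely every such tensor yields a connection. I would therefore show that there is exactly one $A$ for which $D+A$ satisfies (1) and (2).

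First I would rewrite the two conditions in terms of $A$. Because $D$ is torsion free, (1) holds if and only if $A$ is symmetric, i.e. $A_{X}Y=A_{Y}X$. Because $D$ is compatible with $g_{V}$, subtracting $Xg_{V}(Y,Z)=g_{V}(D_{X}Y,Z)+g_{V}(Y,D_{X}Z)$ from (2), and using $\nabla^{V}_{X}V=D_{X}V+A_{X}V$, turns (2) into the tensorial identity
\begin{equation*}
g_{V}(A_{X}Y,Z)+g_{V}(A_{X}Z,Y)+2C_{V}(A_{X}V,Y,Z)=-2C_{V}(D_{X}V,Y,Z).
\end{equation*}
Thus the lemma is equivalent to the statement that the linear map $\mathcal{K}$ sending a symmetric $A$ to the left-hand side is a bijection onto the space of trilinear forms symmetric in $Y,Z$; the right-hand side is a known element of that space. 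Since $\mathcal{K}$ is fibrewise linear between spaces of equal finite dimension $n\binom{n+1}{2}$, it suffices to prove injectivity at each point.

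Here I would invoke two facts about the Cartan tensor: it is totally symmetric (immediate from its definition), and $C_{V}(V,\cdot,\cdot)=0$, which follows by differentiating the homogeneity relation $g_{\lambda V}=g_{V}$ of Lemma \ref{lemma-properties-fundamental-tensor}(a). Assume $\mathcal{K}(A)=0$. Since $g_{V}(A_{\bullet}\bullet,\bullet)$ is symmetric in its first two slots, the usual Koszul cyclic combination applied to the symmetrized identity gives
\begin{equation*}
g_{V}(A_{X}Y,Z)=-C_{V}(A_{X}V,Y,Z)+C_{V}(A_{Z}V,X,Y)-C_{V}(A_{Y}V,Z,X).
\end{equation*}
Setting $X=Y=V$ and using $C_{V}(\cdot,V,\cdot)=0$ yields $g_{V}(A_{V}V,Z)=0$, hence $A_{V}V=0$; feeding this back with $X=V$ gives $A_{V}Y=0$ for all $Y$, and the symmetry of $A$ gives $A_{\bullet}V=0$. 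But then every term on the right above vanishes, so $A=0$. This establishes injectivity, hence bijectivity, of $\mathcal{K}$.

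Consequently the displayed equation has a unique solution $A$ at each point; as $\mathcal{K}$ and the right-hand side depend smoothly on $p$, so does $A$, and $\nabla^{V}:=D+A$ is the required connection. Uniqueness follows at once, since any connection obeying (1) and (2) differs from $D$ by a symmetric tensor satisfying the same equation, which must coincide with $A$. The one delicate point — and the reason the naive Koszul formula does not settle matters directly — is that the unknown $\nabla^{V}_{X}V$ sits inside the Cartan term, making the formula implicit; the identity $C_{V}(V,\cdot,\cdot)=0$ is precisely what decouples it and renders $\mathcal{K}$ invertible. I expect this decoupling step to be the heart of the argument.
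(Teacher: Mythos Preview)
Your argument is correct. The paper, however, does not prove this lemma at all: it is stated in the preliminaries as a known characterization of the Chern connection, with no proof supplied. So there is no ``paper's proof'' to compare against; you have filled in a result the authors take for granted.

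For the record, your strategy is the natural one and is essentially the approach found in standard references: write the sought connection as $\nabla^{V}=D+A$ with $D$ the Levi--Civita connection of $g_{V}$, translate (1) into symmetry of $A$ and (2) into a linear equation for $A$, and then exploit $C_{V}(V,\cdot,\cdot)=0$ to decouple the implicit occurrence of $A_{\bullet}V$ inside the Cartan term. Your Koszul-type manipulation and the bootstrap $A_{V}V=0\Rightarrow A_{V}Y=0\Rightarrow A_{\bullet}V=0\Rightarrow A=0$ are correct, and the dimension count $n\binom{n+1}{2}$ on both sides makes injectivity suffice. One cosmetic remark: you could equally well run the Koszul combination on the inhomogeneous equation directly to obtain an explicit formula for $A$ (first solving for $A_{\bullet}V$ and then substituting back), which would give existence constructively rather than via a dimension argument; but what you wrote is fine.
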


Among other properties Cartan tensor satisfies:
\begin{equation}
\label{eq-properties-Cartan-tensor}
 C_{v}(v,w_{1},w_{2})=C_{v}(w_{1},v,w_{2})=C_{v}(w_{1},w_{2},v)=0 
\end{equation}

Let $\gamma:I\subset \mathbb{R}\to M$ be a piece-wise smooth curve.
As usual we can induce on the pullback bundle $\gamma^{*}(TM)$ over $I$
a covariant derivative $\frac{\nabla^{\gamma'}}{dt}$. A curve $\gamma$ is called \emph{geodesic}
if $\frac{\nabla^{\gamma'}}{dt} \gamma'(t)=0, \forall t\in I$.
Like in Riemannian geometry, for every vector $v\in TM$, there exists a unique maximal geodesic 
$\gamma_v:(a,b)\rightarrow M$ such that $\gamma_{v}'(0)=v$ 
and one can define the exponential map in an open subset 
$\mathcal U\subset TM$ for those vectors $v$ such that the maximal interval of definition 
$(a,b)$ of $\gamma_v$ includes the value $1$. 
Then $\exp:\mathcal U\rightarrow M$ is defined as $\exp(v)=\gamma_v(1)$. This map is smooth away 
from the zero section and  $C^1$ on the zero section.
Define the Finsler distance $d(p,q)$  as the infimum of the lenghts of all 
piecewise smooth curve joining $p$ to $q$, where
the lenght of a curve $\alpha:[a,b]\to M$ (with $p=\alpha(a)$ and $q=\alpha(b)$)  is 
defined as $\mathtt{l}(\alpha)=\int_{a}^{b} F(\alpha'(t)) dt.$ 
Note that $d(p,q)$ may not be equal to  $d(q,p)$. 
Geodesics locally minimize $\mathtt{l}$
among piecewise smooth curves, and hence locally realize the distance.   
More generally, a geodesic $\gamma$  minimizes (in some interval) 
the distance with a submanifold   $P$ if it is \emph{orthogonal} to $P$, i.e., 
if $g_{\gamma'(0)}(\gamma'(0),u)=0$ for all $u\in T_{\gamma(0)}P$.

\subsection{Finsler partition}

Let  $(M,F)$ be a  Finsler manifold. A partition $\F=\{L\}$ of $M$ into connected immersed smooth  submanifolds (the \emph{leaves}) is called \emph{a  Finsler partition}  if  each  geodesic  
$\gamma:(a,b)\rightarrow M$, with $0\in (a,b)\in\mathbb{R}$ 
 orthogonal to the leaf  $L_{\gamma(0)}$   is \emph{horizontal}, i.e., is orthogonal to each leaf it meets. 
 In addition a partition is called a \emph{singular foliation} if for each  $v\in T_pL_p$ there exists a smooth vector field $X$ tangent to the leaves so that $X(p)=v$.

Given a \emph{plaque} $P_{q}$ of a leaf $L$ (i.e, a `` small" relatively compact neighborhood of $q$ in  $L$)  
the set of all (non zero) orthogonal vectors to $P_{q}$ at $q$, denoted as $\nu_{q}P_{q}$ 
is called the \emph{orthogonal cone} and, as the name suggests, 
it  is not always a subspace (without zero) but a cone.

Recall that $U^{+}$ is called a \emph{(future) tubular neighborhood} (of radius $\epsilon$) of the plaque $P_{q}$
if $\exp$ sends $\nu(P_q)\cap F^{-1}((0,\epsilon))$
diffeomorphically to  $U^{+}\setminus P_{q}$,  and all the orthogonal unit speed geodesics from the plaque minimize the distance from the plaque,  at least in the interval $[0,\epsilon]$.   
If we restrict the exponential map $\exp$ to the $\epsilon$-orthogonal cone bundle 
$\nu^{\epsilon}(P_{q}):=\nu(P_{q})\cap F^{-1}(\epsilon)$, then $\exp$ sends $\nu^{\epsilon}(P_{q})$ to the so-called 
\emph{future} cylinder  $\mathcal{C}^{+}_{r}(P_{q})$. Alternatively, defining $f_+:U^{+}\to [0,+\infty)$ as the (future) distance  
$f_{+}(x):=d(P_{q},x)$, we can define $\mathcal{C}^{+}_{r}(P_{q}):=f_{+}^{-1}(r).$ 
Similarly one can define a \emph{past} (or reverse) tubular neighborhood $U^{-}$, and past cylinder $C^{-}_{r}(P_{q})$ considering the reverse metric $F^{-}(v):=F(-v)$; see e.g. \cite[Section 1.5]{Dehkordi}.  

\begin{definition}
We will say that a partition $\F$ is \emph{ locally forward (resp. backward) equidistant} if given a plaque $P_q$, a  future tubular neighborhood $U^{+}$ (resp. a reverse or past tubular neighborhood $U^{-}$) of $P_q$ and a point $x\in U^{+}$ (resp. $x\in U^{-}$) which belongs to the future cylinder  $\mathcal{C}^{+}_{r_{1}}(P_{q})$ (resp. the past cylinder $\mathcal{C}^{-}_{r_{2}}(P_{q})$),  then the  plaque
 $P_{x}\subset U^{+}$ (resp. $P_x\subset  U^{-}$)  is contained in  
$ \mathcal{C}^{+}_{r_{1}}(P_{q})$ (resp.  $\mathcal{C}^{-}_{r_{2}}(P_{q})$).
\end{definition}

\begin{lemma}[\cite{AlexAlvesJavaloyes}]
\label{lemma-equidistant}
A  partition $\F$  is Finsler if and only if its leaves are locally forward and backward equidistant.
\end{lemma}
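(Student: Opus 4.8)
The plan is to reduce the biconditional to a single one-sided statement and then prove that statement by combining the Finsler Gauss lemma with a connectedness argument along geodesics. I will freely use two standard facts from the preliminaries: that a minimizing geodesic from a submanifold $P$ is orthogonal to $P$ (the first-variation counterpart of the orthogonality criterion recalled above), and the \emph{Finsler Gauss lemma} for the distance to a plaque: on a future tubular neighborhood $U^{+}$ of $P_q$ the function $f_{+}:=d(P_q,\cdot)$ is smooth on $U^{+}\setminus P_q$, satisfies $F(\nabla f_{+})=1$, and $\nabla f_{+}(x)=\dot\gamma_x$, where $\gamma_x$ is the unit-speed minimizing orthogonal geodesic from $P_q$ reaching $x$. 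Together with the defining identity $df_{+}(u)=g_{\nabla f_{+}}(\nabla f_{+},u)$, these give the crucial equivalence
\begin{equation*}
f_{+}\big|_{P_x}\equiv r \iff df_{+}\big|_{T_xL_x}=0 \iff g_{\dot\gamma_x}(\dot\gamma_x,u)=0\ \ \forall u\in T_xL_x,
\end{equation*}
i.e. $P_x$ lies in the cylinder $\mathcal{C}^{+}_{r}(P_q)$ exactly when $\gamma_x$ is orthogonal to $L_x$ at $x$. Finally, since the geodesics of the reverse metric $F^{-}(v)=F(-v)$ are the reversals $t\mapsto\gamma(-t)$ of geodesics of $F$, and since $F$-orthogonality of $\gamma$ to a leaf is equivalent to $F^{-}$-orthogonality of its reversal, the property ``$\F$ is Finsler'' and the notion of horizontality are invariant under $F\leftrightarrow F^{-}$, while forward and backward equidistance are interchanged. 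This lets me prove only the forward statements and obtain the backward ones for free by passing to $F^{-}$.

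For the direction Finsler $\Rightarrow$ equidistant, fix a plaque $P_q$, a future tubular neighborhood $U^{+}$, and $x\in\mathcal{C}^{+}_{r}(P_q)$. The minimizing geodesic $\gamma_x$ from $P_q$ to $x$ is orthogonal to $P_q$, hence horizontal, so in particular $\dot\gamma_x\perp L_x$ at $x$. By the displayed equivalence this forces $f_{+}\equiv r$ on $P_x$, that is $P_x\subset\mathcal{C}^{+}_{r}(P_q)$; so $\F$ is locally forward equidistant, and locally backward equidistant by the same argument for $F^{-}$.

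For the converse, let $\gamma:(a,b)\to M$ be a geodesic with $\gamma\perp L_{\gamma(0)}$, and set $T:=\{t\in(a,b):\gamma\text{ is orthogonal to }L_{\gamma(t)}\}$; then $0\in T$ and I claim $T$ is open and closed. For openness, if $t_0\in T$ then $\gamma$ leaves $P_{\gamma(t_0)}$ orthogonally, so for small $s>0$ the segment $\gamma|_{[t_0,t_0+s]}$ is a minimizing orthogonal geodesic and $\gamma(t_0+s)\in\mathcal{C}^{+}_{s}(P_{\gamma(t_0)})$; forward equidistance puts $P_{\gamma(t_0+s)}$ in the same cylinder, and the displayed equivalence gives $\dot\gamma(t_0+s)\perp L_{\gamma(t_0+s)}$, i.e. $t_0+s\in T$. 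Backward equidistance handles $s<0$, so a whole neighborhood of $t_0$ lies in $T$. For closedness, take $t_n\to t_\infty\in(a,b)$ with $t_n\in T$; for $n$ large the short segment $\gamma|_{[t_n,t_\infty]}$, which is orthogonal to $P_{\gamma(t_n)}$ because $t_n\in T$, is minimizing and lies inside the future tube of $P_{\gamma(t_n)}$, so forward equidistance again yields $\dot\gamma(t_\infty)\perp L_{\gamma(t_\infty)}$ and $t_\infty\in T$. As $(a,b)$ is connected, $T=(a,b)$ and $\gamma$ is horizontal; hence $\F$ is Finsler.

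The one genuinely delicate point is the closedness step, where I must ensure that the future tube of $P_{\gamma(t_n)}$ still reaches $\gamma(t_\infty)$ as $t_n\to t_\infty$ — equivalently, that the tubular radius does not collapse along $\gamma$, which is most at risk when $L_{\gamma(t_\infty)}$ is a singular (lower-dimensional) leaf onto which the neighboring plaques accumulate. I expect to dispatch this by a compactness argument on the relatively compact plaques along $\gamma([0,t_\infty])$, yielding a uniform positive lower bound for the minimizing radius. The key structural advantage is that the whole equidistance argument is phrased through plaques and cylinders and never through tangent distributions, so it is insensitive to jumps in leaf dimension and avoids any continuity-of-$T L$ issue at singular leaves.
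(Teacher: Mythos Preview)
The paper does not prove this lemma; it is quoted from \cite{AlexAlvesJavaloyes} and only used as a black box in Section~\ref{section-question2}. So there is no ``paper's own proof'' to compare against. That said, your strategy (Gauss lemma for $f_+$ plus an open--closed argument along a horizontal geodesic, with the $F\leftrightarrow F^{-}$ symmetry to halve the work) is the natural one and is essentially the argument in \cite{AlexAlvesJavaloyes}.

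Two points deserve tightening. First, your displayed equivalence is mis-stated: $df_{+}|_{T_xL_x}=0$ at the single point $x$ does \emph{not} imply $f_{+}|_{P_x}\equiv r$; you need the vanishing at every point of $P_x$. In the ``Finsler $\Rightarrow$ equidistant'' direction this matters: you only argue that $\dot\gamma_x\perp L_x$ at the one point $x$ and then invoke the false direction of the biconditional. The fix is immediate --- the same reasoning applies at every $y\in U^{+}\setminus P_q$ (the minimizing geodesic $\gamma_y$ starts orthogonal to $P_q$, hence is horizontal, hence $df_{+}|_{T_yL_y}=0$), so $f_{+}$ is locally constant on each plaque in $U^{+}$ and your conclusion follows.

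Second, the closedness step is, as you yourself flag, the genuine gap. Your proposed compactness argument is the right idea, but it has to be made precise: you need a positive lower bound for the tubular radius of the plaques $P_{\gamma(t)}$ that is \emph{uniform} on compact subintervals of $(a,b)$, so that the ``step size'' in the openness argument does not degenerate as $t_n\to t_\infty$. The worry is exactly the one you identify --- near a lower-dimensional leaf the regular plaques may have arbitrarily small normal injectivity radius. One clean way around this (and the route taken in \cite{AlexAlvesJavaloyes}) is to argue from the other side: take a future/past tube of $P_{\gamma(t_\infty)}$ itself, of some fixed radius $\epsilon>0$, and use equidistance at $\gamma(t_\infty)$ to conclude that the plaques $P_{\gamma(t)}$ for $|t-t_\infty|<\epsilon$ lie on cylinders over $P_{\gamma(t_\infty)}$; then orthogonality of $\gamma$ at some $t_n$ in this range forces $\gamma|_{[t_n,t_\infty]}$ to coincide with the radial geodesic of that tube, giving $\dot\gamma(t_\infty)=\nabla f_{\pm}$ and hence $t_\infty\in T$. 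This avoids having to control the tubes of the nearby (possibly collapsing) plaques altogether.
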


In the particular case of a foliation of codimension 1
given by pre images of a function $f:M\to \mathbb{R}$ 
we have the following useful definition.

\begin{definition}[Forward parallel level sets]
Let $f:M\to\mathbb{R}$ be a smooth function and $f^{-1}(c_1)$ and $f^{-1}(c_2)$ 
two regular level sets, with $c_{1} < c_{2}.$  We say that 
$f^{-1}(c_{1})$ is \emph{is forward parallel to} $f^{-1}(c_{2})$
if each geodesic that starts orthogonal to $f^{-1}(c_{1})$ and meets 
 $f^{-1}(c_2)$ is orthogonal to $f^{-1}(c_{2}).$               
\end{definition}


\section{Basic remarks and examples}
\label{section-examples}

In this section we  discuss a few basic examples of transnormal functions
on Finsler manifolds  stressing differences between them and transnormal functions on Riemannian manifolds.

Along  this section we  restrict our attention to a special but important type  of Finsler metric. 
A Finsler metric  $\zermelo:TM\to [0,+\infty)$ is said to be a \emph{Randers metric}
with \emph{Zermelo Data} $(\metriczermelo,W),$  for a Riemannian metric $\metriczermelo$ 
and  smooth vector field  $W$ with $\metriczermelo(W,W)<1$  on  $M$  (the \emph{wind}), if $\zermelo$ is the solution of 
\begin{equation}
\label{eq-ex-1}
\metriczermelo(\frac{v}{\zermelo(v)}-W,\frac{v}{\zermelo(v)}-W)=1, v \in TM\setminus \bf 0.
\end{equation}
Equivalently we can define $\zermelo(v)=\alpha(v)+\beta(v)$ where $\alpha$ is a Riemannian norm and $\beta$ a 1-form 
(with $\alpha(\beta)<1$ ) both determined by  $(\metriczermelo,W);$ recall   \cite{Robles}.

\begin{lemma}
\label{lemma-gradient-finsler-riemannian}
Let $f:U\subset M\to \mathbb{R}$ be a smooth function without critical points on $U$. Let $Z$ be a Randers metric with Zermelo data 
$(h,W).$ Let $\nabla f$ and $\widetilde{\nabla} f$ be  the gradients with respect to $Z$ and $h$.   
Then
\begin{enumerate}
\item[(a)] $\frac{\| \widetilde{\nabla} f \|}{Z(\nabla f)}\Big( \nabla f-Z(\nabla f)W  \Big)= \widetilde{\nabla} f    $
\item[(b)] $Z(\nabla f)=\| \widetilde{\nabla} f \| + df(W) $
\end{enumerate}
where $\| v \|=\sqrt{h(v,v)}$. 
\end{lemma}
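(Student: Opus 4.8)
The plan is to exploit the defining relation of the Randers metric via its Zermelo data and the characterization of the gradient through the Legendre transform. The key structural fact is that for a Randers metric $Z$ with Zermelo data $(h,W)$, the unit ball of $Z$ at a point $p$ is the unit ball of $h$ translated by the wind $W(p)$; equivalently, by \eqref{eq-ex-1}, $v \in TM\setminus\{0\}$ satisfies $h(v/Z(v)-W,\,v/Z(v)-W)=1$. First I would recall that $\nabla f = \mathcal{L}^{-1}(df)$, where $\mathcal{L}(v)=g^{Z}_{v}(v,\cdot)$ is the Legendre transform of $\tfrac12 Z^2$, while $\widetilde{\nabla}f$ is the $h$-gradient, characterized by $df(\cdot)=h(\widetilde{\nabla}f,\cdot)$. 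So the goal is to relate these two dual objects.

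The cleanest route is to go through the co-metrics (the dual norms on $T^*M$). Writing $\vartheta := df$, the identity $Z(\nabla f)=Z^{*}(\vartheta)$ holds, where $Z^{*}$ is the dual Minkowski norm; and there is a classical formula for the dual of a Randers norm: if $Z(v)=\alpha(v)+\beta(v)$ with $(\alpha,\beta)$ the Randers data associated to $(h,W)$, then $Z^{*}$ is again of Randers type, and in Zermelo terms $Z^{*}(\vartheta) = \sqrt{h^{*}(\vartheta,\vartheta)} + \vartheta(W)$, i.e.\ $\|\widetilde{\nabla}f\| + df(W)$ once we identify $h^{*}(\vartheta,\vartheta)=h(\widetilde{\nabla}f,\widetilde{\nabla}f)=\|\widetilde{\nabla}f\|^2$ and $\vartheta(W)=df(W)$. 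That gives item (b) directly. Alternatively, and perhaps more self-containedly, I would argue geometrically: the vector $\nabla f$ is the unique $v$ with $df(v)=Z(v)^2$ and $df(u)\le Z(u)\,Z(v)$ for all $u$ — the point where the hyperplane $\{df=Z(\nabla f)^2\}$ supports the ball $Z\le Z(\nabla f)$. Translating that support picture by $W$ (using that the $Z$-ball of radius $\rho$ is the $h$-ball of radius $\rho$ centered at $\rho W$), the support point of the $Z$-ball corresponds to the support point of the $h$-ball plus the translation, which is exactly the content of (a): the normalized vector $\nabla f - Z(\nabla f)\,W$ points in the $h$-gradient direction, and rescaling by $\|\widetilde{\nabla}f\|/Z(\nabla f)$ makes it equal to $\widetilde{\nabla}f$.

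Concretely, I would carry it out in this order: (i) set $\vartheta=df$ and record $df(v)=g^{Z}_{\nabla f}(\nabla f,v)$ and $df(v)=h(\widetilde{\nabla}f,v)$; (ii) use \eqref{eq-ex-1} to write, for a scalar $\lambda>0$ to be determined, $h\!\left(\tfrac{\nabla f}{Z(\nabla f)}-W,\ \tfrac{\nabla f}{Z(\nabla f)}-W\right)=1$, and guess that $\widetilde{\nabla}f = \lambda\big(\nabla f - Z(\nabla f)W\big)$ for some $\lambda>0$; (iii) determine $\lambda$ by pairing this candidate with $\widetilde{\nabla}f$ itself and with $\nabla f$, using $df(\nabla f)=Z(\nabla f)^2$ (which follows from Lemma \ref{lemma-properties-fundamental-tensor}(b) applied to $g^{Z}_{\nabla f}$) and $df(\widetilde{\nabla}f)=\|\widetilde{\nabla}f\|^2$; this forces $\lambda=\|\widetilde{\nabla}f\|/Z(\nabla f)$ and simultaneously yields $Z(\nabla f)=\|\widetilde{\nabla}f\|+df(W)$, giving (b); (iv) substitute $\lambda$ back to obtain (a).

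The main obstacle I anticipate is step (ii)–(iii): one must justify that $\widetilde{\nabla}f$ really is a \emph{positive} multiple of $\nabla f - Z(\nabla f)W$ rather than merely parallel to it, and that the Legendre transform of $\tfrac12 Z^2$ interacts with the Zermelo translation in the expected way. The positivity is where the hypothesis $h(W,W)<1$ enters (it guarantees $Z$ is a genuine Finsler metric and that $df(\nabla f)>0$ on $U$, since $f$ has no critical points), and the translation identity is essentially the defining property \eqref{eq-ex-1}; making these two points precise — ideally by the support-hyperplane argument, which sidesteps computing $g^{Z}_{v}$ explicitly — is the crux, after which both (a) and (b) drop out by bookkeeping.
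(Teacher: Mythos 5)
Your plan is correct, but it follows a genuinely different route from the paper. The paper's proof is a direct computation: it invokes the explicit formula for the fundamental tensor of a Randers metric, $g_{v}(v,u)=\tfrac{Z(v)}{\mu\,\alpha(v)}\,h\big(v-Z(v)W,u\big)$ (citing Javaloyes--S\'anchez), so that $h(\widetilde{\nabla}f,\cdot)=df(\cdot)=g_{\nabla f}(\nabla f,\cdot)$ immediately exhibits $\widetilde{\nabla}f$ as a \emph{positive} multiple $k\big(\nabla f-Z(\nabla f)W\big)$ with no guessing; the constant $k$ is then pinned down by taking $h$-norms and using the Zermelo relation \eqref{eq-ex-1}, which gives $\|\nabla f-Z(\nabla f)W\|=Z(\nabla f)$, and item (b) follows by expanding $Z^{2}(\nabla f)=g_{\nabla f}(\nabla f,\nabla f)$. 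You instead avoid the explicit fundamental-tensor formula and work with duality: $\{Z\le 1\}=W+B_{h}$ by \eqref{eq-ex-1}, the linear functional $df$ is maximized over this body at $\nabla f/Z(\nabla f)$ (by the fundamental inequality $g_{v}(v,u)\le F(v)F(u)$ with equality only for $u$ positively parallel to $v$) and at $W+\widetilde{\nabla}f/\|\widetilde{\nabla}f\|$ (by translating the Euclidean support point), which yields (a), while $Z^{*}(df)=\|df\|_{h^{*}}+df(W)$ yields (b). You correctly flag the crux — establishing that the multiple is positive rather than merely that the vectors are parallel — and your support-hyperplane argument does close it, at the cost of invoking the Finslerian Cauchy--Schwarz inequality with its equality case; the paper's route buys this positivity for free from the cited formula $k=\tfrac{Z(\nabla f)}{\mu\,\alpha(\nabla f)}>0$, at the cost of relying on that external computation. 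Both are sound; yours is more geometric and self-contained with respect to the Randers structure, the paper's is shorter given its reference.
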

\begin{proof}
For a Randers metric $\zermelo$ it is well known (e.g., \cite[Cor. 4.17]{Javaloyes-Sanchez}) that:
 \begin{equation} 
\label{eq-ex-1-5}
g_{v}(v,u)=\frac{Z(v)}{\mu \alpha(v)}\Big( h(v-Z(v)W ,u) \Big) 
\end{equation}
where $\mu=1-\alpha(\beta)^{2}.$ Eq. (\ref{eq-ex-1-5}) and the definition of gradient imply
\begin{eqnarray*}
h(\widetilde{\nabla} f, u)& = & d f(u)\\
&=& g_{\nabla f}(\nabla f,u)\\
&=&\frac{Z(\nabla f)}{\mu \alpha(\nabla f)}\Big( h(\nabla f-Z(\nabla f)W ,u) \Big)
\end{eqnarray*}

Therefore, by setting $k:=\frac{Z(\nabla f)}{\mu \alpha(\nabla f)}>0 $ we have:
\begin{equation}
\label{eq-ex-2}
\widetilde{\nabla} f=k\Big( \nabla f-Z(\nabla f)W  \Big);
\end{equation}
By taking the norm $\| \cdot \|$ on both sides of Eq.~(\ref{eq-ex-2}) and replacing $v=\nabla f$ in Eq.~(\ref{eq-ex-1})   we infer
\begin{equation}
\label{eq-ex-3}
\|\widetilde{\nabla} f\|=k\| ( \nabla f-Z(\nabla f)W  )\|=k Z(\nabla f)
\end{equation}
and hence $k=\frac{\|\widetilde{\nabla} f\|}{Z(\nabla f)}$. This together with Eq.~(\ref{eq-ex-2}) finish the proof of item (a).

Item (a), Eq.~(\ref{eq-ex-1-5}) and item (b) of Lemma \ref{lemma-properties-fundamental-tensor} imply
\begin{eqnarray*}
Z^{2}(\nabla f) &=& g_{\nabla f}(\nabla f, \nabla f)\\
                &=& k h\big(\nabla f-Z(\nabla f)W,\nabla f  \big)\\
								&=& k h\big(\frac{\widetilde{\nabla}f }{k}, \frac{\widetilde{\nabla}f }{k}+Z(\nabla f)W  \big)\\
								&=& \frac{\|\widetilde{\nabla}f\|^{2} }{k}+ Z(\nabla f) df (W)
\end{eqnarray*}
The above equation finishes  the proof of item (b) because  $k=\frac{\|\widetilde{\nabla} f\|}{Z(\nabla f)}$.

\end{proof}

\begin{remark}
\label{remark-transnormal-function}
It was proved in \cite[Proposition 2.12]{AlexAlvesJavaloyes} that a partition $\F=\{ L\}$ given by a submersion 
on a Randers space $(M,\zermelo)$ with Zermelo data $(\metriczermelo, W)$ is Finsler, i.e.,  
its leaves are locally forward and backward equidistant, 
if and only if $W$ is $\F$ foliated vector field
(i.e., it projects to a vector field on the basis of the submersion)
  and  $\F$ is Riemannian with respect to $\metriczermelo$. 
This result together with  Lemma \ref{lemma-gradient-finsler-riemannian}
imply the following result: 
\emph{ Let $f:M\to \mathbb{R}$ be a $\zermelo$-transnormal function on a Randers space $(M,\zermelo)$ 
with Zermelo data $(\metriczermelo, W)$. Then the regular level sets are leaves of a Finsler partition, if and only if $W$ 
is $\F$ foliated vector field}. 
\end{remark}

\begin{example}
Let $(V,\zermelo)$ be a Randers Minkowski space with Zermelo data $(\metriczermelo, W)$ and define $f(x):=d(0,x)$. 
From  \cite[Lemma 3.2.3]{Zhongmin-Shen} we know that  $\btransnormal=1$, i.e, $f:V\setminus \{0\} \to \mathbb{R}$ is a 
$\zermelo$-transnormal function. 
As we will see in Section \ref{section-question1} the partition $\F= \{ f^{-1}(c)\}_{c>0}$ is forward  parallel.
Remark \ref{remark-transnormal-function} above implies that this partition is not a Finsler partition. 
This also follows from Lemma \ref{lemma-equidistant}  
because future spheres with center at $0$
(i.e., translation  of $\metriczermelo$-spheres in the direction of $W$ ) 
are not the same as the past spheres with the center at $0$ (i.e., translation  of $\metriczermelo$-spheres in opposite direction of $W$). 
As we have stressed in the introduction, this phenomenon is different from what happens in the Riemannian case, where transnormality already
implies that the level sets are equidistant. 

\end{example}

\begin{remark}
\label{remark-C2}
Let $(M,\zermelo)$ be a Randers space with Zermelo data $(\metriczermelo, W)$. \emph{ Let $f:M\to \mathbb{R}$ be a smooth 
$\metriczermelo$-transnormal function with $\tilde{\btransnormal}\in C^{2}\big(f(M)\big)$. Suppose also that
 $W$ is a $\F$-foliated vector field, where $\F= \{ f^{-1}(c)\}$. Using Lemma \ref{lemma-gradient-finsler-riemannian} it is possible to check that 
 $f$ is a $\zermelo$-transnormal function with  $\btransnormal \in C^{0}\big(f(M)\big)$}. 
As we are going to see below, there is a simple example where $\btransnormal\notin C^{2}\big(f(M)\big).$ 
This indicates another phenomenon that is different from the Riemannian case, where the assumption 
$\tilde{\btransnormal}\in C^{2}\big(f(M)\big)$ is natural.

\end{remark}

\begin{example}
\label{ex-C2}
Consider $f:D\to \mathbb{R}$ where $D$ is a disc of 
radius smaller than $1$ and  $f(x,y)=x^{2}+y^{2}$.  
Let $\zermelo$ be  the Randers metric with Zermelo data
$(\metriczermelo_0, W)$ where $\metriczermelo_0$ is the Euclidean metric of
$\mathbb{R}^{2}$ and $W=(x,y).$ 
From Lemma \ref{lemma-gradient-finsler-riemannian} we conclude that $b(t)=(2\sqrt{t}+2 t)^{2}$.
\end{example}

\section{ Question \ref{question-1}}
\label{section-question1}

The goal of this section is to give an alternative proof to  Proposition \ref{proposition-forward-equidistant} below, that was proved at
\cite{He-Yin-Shen}.

We start by recalling the next lemma, proved at \cite[Lemma 3.2.1]{Zhongmin-Shen}
\begin{lemma}
\label{lemma-transnormal-RF1} 
	Let  $(M,F)$ be a Finsler space, $U$ be an open subset of  $M$ and $f$ be a smooth function on 
$U$ without critical points on $U$.  
Set $\hat{g}:=g_{\nabla f}$ and $\widehat{F}:=\sqrt{\hat{g}}$. 
Then $$\nabla f=\widehat{\nabla}f,$$ where $\widehat{\nabla}f$ denotes the gradient of $f$ with respect to $\widehat{F}$. 
		Moreover $$F(\nabla f)=\widehat{F}(\widehat{\nabla} f).$$ 
\end{lemma}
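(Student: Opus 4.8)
\textbf{Proof proposal for Lemma \ref{lemma-transnormal-RF1}.}

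The plan is to argue pointwise. Fix $p\in U$ and set $v:=\nabla f(p)\in T_pM\setminus\{0\}$. The key observation is that the Riemannian metric $\hat g = g_{\nabla f}$ has, \emph{at the point $p$}, fundamental tensor equal to $g_v$ itself: indeed $\widehat F = \sqrt{\hat g}$ is Riemannian, so its fundamental tensor is $\hat g$ independently of direction, and in particular $\widehat{g}_v = g_{\nabla f(p)} = g_v$ at $p$. Thus the Legendre transforms of $\tfrac12 F^2$ and $\tfrac12\widehat F^2$ agree on the vector $v$: by Lemma \ref{lemma-properties-fundamental-tensor}(c) we have $\mathcal L(v) = g_v(v,\cdot) = \widehat g(v,\cdot) = \widehat{\mathcal L}(v)$. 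Now apply both sides to $df$: by definition $\nabla f = \mathcal L^{-1}(df)$ means $df = \mathcal L(\nabla f) = \mathcal L(v)$, and the computation just made shows $df = \widehat{\mathcal L}(v)$ as well, hence $v = \widehat{\mathcal L}^{-1}(df) = \widehat\nabla f(p)$. Since $p$ was arbitrary this gives $\nabla f = \widehat\nabla f$ on $U$.

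For the second assertion, once we know $\widehat\nabla f = \nabla f = v$, we simply compute $\widehat F(\widehat\nabla f)^2 = \widehat g(v,v) = g_v(v,v) = F(v)^2 = F(\nabla f)^2$, where the middle equality is again the fact that $\hat g = g_{\nabla f}$ has fundamental tensor $g_v$ at $p$, and the next is Lemma \ref{lemma-properties-fundamental-tensor}(b). Taking square roots finishes the proof.

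The only slightly delicate point — and the one I would state carefully rather than the computations, which are routine — is the logical structure around the Legendre transform: one must be sure that $\widehat{\mathcal L}$ is invertible near $v$ (it is, being the Legendre transform of a Riemannian metric, a linear isomorphism on each tangent space) and that the identity $\widehat g_v = g_v$ really does hold \emph{at $p$} and is all that is needed, even though $g_{\nabla f}$ as a tensor field on $U$ generally differs from $g_w$ for $w\neq \nabla f(p)$. In other words, the whole lemma is the statement that $F$ and $\widehat F$ are ``tangent to second order along the vector field $\nabla f$,'' and since the gradient of $f$ only sees the fundamental tensor evaluated on $df^\sharp$, replacing $F$ by $\widehat F$ changes nothing. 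No further machinery (Chern connection, Cartan tensor, geodesics) is required for this statement.
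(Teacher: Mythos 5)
Your argument is correct, and it is essentially the standard proof of this statement: the paper itself gives no proof but defers to \cite[Lemma 3.2.1]{Zhongmin-Shen}, whose argument is the same pointwise Legendre-transform computation you carry out. The one point you rightly flag — that $\widehat{g}_v=\hat g=g_{\nabla f(p)}$ at $p$ because the fundamental tensor of a Riemannian norm is direction-independent, so $\mathcal L$ and $\widehat{\mathcal L}$ agree on the single vector $\nabla f(p)$ and that is all the gradient construction uses — is exactly the crux, and your handling of it is complete.
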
 

\begin{remark}
\label{remark-transnormal-FR}
As proved at \cite[Lemma 3.2.2]{Zhongmin-Shen},  the gradient $\nabla f$ of functions on Finsler space $(M,F)$
is orthogonal to each regular level set. 
\end{remark}

We also need this other known result, that follows by using a Koszul type formula associated to  the Chern connection. 

\begin{lemma}
\label{lemma-transnormal-RF2}
Let $X$ be a smooth vector field without singularities on
an open set $U$. Consider  the  Riemannian metric $\hat{g}:=g_{X}$ on $U$,
 the associated Riemannian connection (associated to $\hat{g}$)  $\widehat{\nabla}$ 
and the Chern connection $\nabla^{X}$. 
Then $\nabla_{X}^{X}X=\widehat{\nabla}_{X} X$.
In particular,  $X$ is a vector field on $U$  so that its integral curves are geodesics
(with respect to $F$) if and only if  $X$ has the same property
 with respect to  $\widehat{F}:=\sqrt{\hat{g}}.$
\end{lemma}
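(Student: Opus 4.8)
The plan is to fix an arbitrary test vector field $W$ on $U$ and compare the scalars $g_{X}(\nabla^{X}_{X}X,W)$ and $g_{X}(\widehat{\nabla}_{X}X,W)$; since $\hat g=g_{X}$ is nondegenerate, showing they agree for all $W$ yields $\nabla^{X}_{X}X=\widehat{\nabla}_{X}X$.

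First I would apply property (2) of the Chern connection in the direction $X$ to the pair $(X,W)$, obtaining $X\cdot g_{X}(X,W)=g_{X}(\nabla^{X}_{X}X,W)+g_{X}(X,\nabla^{X}_{X}W)+2C_{X}(\nabla^{X}_{X}X,X,W)$, where the Cartan term vanishes by \eqref{eq-properties-Cartan-tensor} because the reference vector $X$ sits in the middle slot of $C_{X}$. Next, torsion-freeness (property (1)) gives $\nabla^{X}_{X}W=\nabla^{X}_{W}X+[X,W]$, and applying property (2) in the direction $W$ to the pair $(X,X)$ produces $g_{X}(\nabla^{X}_{W}X,X)=\tfrac12\,W\cdot g_{X}(X,X)$ (again the Cartan term dies, this time because two slots carry $X$). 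Substituting, one gets
\[
g_{X}(\nabla^{X}_{X}X,W)=X\cdot g_{X}(X,W)-\tfrac12\,W\cdot g_{X}(X,X)-g_{X}([X,W],X).
\]
On the other hand, the classical Koszul formula for the Levi-Civita connection $\widehat{\nabla}$ of $\hat g=g_{X}$, specialized to $\widehat{\nabla}_{X}X$, has exactly this right-hand side with $\hat g$ in place of $g_{X}$. Hence $g_{X}(\nabla^{X}_{X}X-\widehat{\nabla}_{X}X,W)=0$ for every $W$, which proves the first assertion.

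For the ``in particular'' clause: if $\gamma$ is an integral curve of $X$, then the covariant derivative of $\gamma'$ along $\gamma$ induced by the Chern connection with reference vector $\gamma'=X\circ\gamma$ coincides with $(\nabla^{X}_{X}X)\circ\gamma$, so $\gamma$ is an $F$-geodesic iff $\nabla^{X}_{X}X=0$ along $\gamma$; running over all integral curves, this is equivalent to $\nabla^{X}_{X}X\equiv 0$ on $U$. Since $\widehat{F}=\sqrt{\hat g}$ is Riemannian with Levi-Civita connection $\widehat{\nabla}$, the integral curves of $X$ are $\widehat{F}$-geodesics iff $\widehat{\nabla}_{X}X\equiv 0$ on $U$, and by the identity just established these two conditions coincide.

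The main thing to be careful about is the bookkeeping: arranging each use of property (2) so that the reference vector $X$ lands in one of the three slots of $C_{X}$ (making \eqref{eq-properties-Cartan-tensor} applicable), and matching the signs and the $[X,W]$-term against the classical Koszul identity. The identification of $\tfrac{\nabla^{\gamma'}}{dt}\gamma'$ with $(\nabla^{X}_{X}X)\circ\gamma$ along integral curves of $X$ is standard compatibility of the pullback connection and I would only mention it rather than prove it.
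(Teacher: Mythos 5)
Your proposal is correct and follows exactly the route the paper indicates: the paper gives no written proof of this lemma, saying only that it ``follows by using a Koszul type formula associated to the Chern connection,'' and your argument is precisely that Koszul computation carried out in full, with the Cartan terms correctly killed by Eq.~(\ref{eq-properties-Cartan-tensor}) and the result matched against the classical Koszul identity for the Levi-Civita connection of $\hat g=g_X$.
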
 

\begin{proposition}
\label{proposition-forward-equidistant}
Let $(M,F)$ be a  forward complete Finsler space. 
Let $f:M\to \mathbb{R}$ be a $F$-transnormal function, $c<d$ regular values on $f(M)$
so that $[c,d]$ does not have singular values. Then for each $q\in f^{-1}(d)$
$$ d(f^{-1}(c),q)=d(f^{-1}(c),f^{-1}(d))=\int_{c}^{d}\frac{d s}{\sqrt{\btransnormal(s)} }. $$
In addition the integral curves of the vector field  $\nabla f$ (i.e., the gradient flow), 
when parameterized by arc length, are horizontal geodesics joining  $f^{-1}(c)$ to $f^{-1}(d)$
and realize the distance between these two regular leaves.  
 \end{proposition}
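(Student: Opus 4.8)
The plan is to reduce the Finsler statement to a Riemannian computation via the Zermelo-type trick encoded in Lemmas \ref{lemma-transnormal-RF1} and \ref{lemma-transnormal-RF2}, and then to carry out the classical transnormal-function argument on that auxiliary Riemannian metric. First I would fix a point $q\in f^{-1}(d)$ and observe that, since $[c,d]$ contains no singular values, the open set $U:=f^{-1}((c-\eps,d+\eps))$ (for small $\eps>0$) contains no critical points of $f$, so the vector field $X:=\nabla f$ has no singularities on $U$. Set $\hat g:=g_{\nabla f}$ and $\widehat F:=\sqrt{\hat g}$ on $U$. By Lemma \ref{lemma-transnormal-RF1}, $\nabla f=\widehat\nabla f$ and $F(\nabla f)=\widehat F(\widehat\nabla f)$, so $f$ is also $\widehat F$-transnormal with the \emph{same} function $\btransnormal$, i.e. $\hat g(\widehat\nabla f,\widehat\nabla f)=\btransnormal(f)$. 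Now I am in the Riemannian setting: for a Riemannian transnormal function the normalized gradient flow lines are unit-speed geodesics orthogonal to all level sets they meet, and the distance across is $\int_c^d \btransnormal(s)^{-1/2}\,ds$. Concretely, consider the reparametrization: along an integral curve of $\widehat\nabla f$, writing $\dot s=\hat g(\widehat\nabla f,\widehat\nabla f)=\btransnormal(s)$ where $s=f(\gamma(t))$, one integrates $dt=ds/\btransnormal(s)$; reparametrizing by $\widehat F$-arclength multiplies speed by $\widehat F(\widehat\nabla f)=\sqrt{\btransnormal(f)}$, and a direct computation (the standard one for Riemannian transnormal functions, using that $\widehat\nabla(\btransnormal\circ f)$ is proportional to $\widehat\nabla f$) shows the unit-speed reparametrization $\bar\gamma$ satisfies $\widehat\nabla_{\bar\gamma'}\bar\gamma'=0$, i.e. $\bar\gamma$ is a $\widehat F$-geodesic; its $\widehat F$-length from $f^{-1}(c)$ to $f^{-1}(d)$ is $\int_c^d \btransnormal(s)^{-1/2}\,ds$.

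Next I would transfer this back to $F$. By Lemma \ref{lemma-transnormal-RF2}, since $X=\nabla f$ has integral curves that are $\widehat F$-geodesics after unit-speed reparametrization, the same reparametrized curves are $F$-geodesics (the lemma's ``in particular'' clause, applied to the vector field tangent to $\bar\gamma$). Moreover $F$ and $\widehat F$ restricted to $\nabla f$-directions agree, so the $F$-length of $\bar\gamma$ equals its $\widehat F$-length, namely $\int_c^d \btransnormal(s)^{-1/2}\,ds$; this proves the claimed upper bound $d(f^{-1}(c),q)\le \int_c^d \btransnormal(s)^{-1/2}\,ds$. To see these geodesics are horizontal, note that orthogonality to a level set is a condition on $g_{\gamma'}(\gamma',\cdot)$; since $\gamma'$ is (a positive multiple of) $\nabla f$, Remark \ref{remark-transnormal-FR} gives that $\nabla f$ is $F$-orthogonal to every regular level set it meets, so $\bar\gamma$ is orthogonal to each $f^{-1}(r)$, $r\in[c,d]$ — i.e. horizontal.

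Finally, for the lower bound $d(f^{-1}(c),q)\ge \int_c^d \btransnormal(s)^{-1/2}\,ds$ I would take an arbitrary piecewise-smooth curve $\alpha:[0,1]\to M$ with $\alpha(0)\in f^{-1}(c)$, $\alpha(1)=q$; by forward completeness and the intermediate value theorem we may assume $\alpha$ stays in $U$ after truncating at the first return to level $c$ and ignoring excursions above $d$. Then
\begin{equation*}
F(\alpha'(t)) \;\ge\; \frac{g_{\nabla f}(\nabla f,\alpha'(t))}{F(\nabla f)} \;=\; \frac{df(\alpha'(t))}{\sqrt{\btransnormal(f(\alpha(t)))}} \;=\; \frac{(f\circ\alpha)'(t)}{\sqrt{\btransnormal(f(\alpha(t)))}},
\end{equation*}
where the inequality is the fundamental-tensor Cauchy--Schwarz $g_v(v,u)\le F(v)F(u)$ applied with $v=\nabla f$, $u=\alpha'(t)$, together with $F(\nabla f)=\sqrt{\btransnormal(f)}$. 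Integrating and using the substitution $s=f(\alpha(t))$ gives $\mathtt{l}(\alpha)\ge\int_c^d \btransnormal(s)^{-1/2}\,ds$, which yields the matching lower bound and forces $d(f^{-1}(c),q)=d(f^{-1}(c),f^{-1}(d))=\int_c^d \btransnormal(s)^{-1/2}\,ds$ and that the gradient flow lines realize it. The main obstacle I anticipate is the bookkeeping in this last step: justifying that a distance-minimizing (or near-minimizing) curve can be taken to lie in $U$ and to have $f\circ\alpha$ monotone — this needs forward completeness, the absence of singular values on $[c,d]$, and a short argument that detours outside $[c,d]$ or backtracking only increase length; everything else is a direct translation through the two Riemannian lemmas already available.
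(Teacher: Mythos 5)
Your first two paragraphs follow essentially the same route as the paper: reduce to the Riemannian metric $\hat g = g_{\nabla f}$ via Lemma \ref{lemma-transnormal-RF1}, invoke the classical analysis of Riemannian transnormal functions to see that the arc-length reparametrizations of the gradient flow lines are unit-speed horizontal geodesics of length $\int_c^d \btransnormal(s)^{-1/2}\,ds$, and transfer back to $F$ with Lemma \ref{lemma-transnormal-RF2} and Remark \ref{remark-transnormal-FR}. Where you genuinely diverge is the lower bound: the paper takes a unit-speed $F$-geodesic from $f^{-1}(c)$ to $q$ realizing the distance (existence from forward completeness), shows its initial velocity must point along $\nabla f$, and concludes by uniqueness of geodesics that it coincides with one of the flow lines; you instead estimate the length of an arbitrary competitor curve by the fundamental inequality $g_v(v,u)\le F(v)F(u)$ and integrate, recognizing the integrand as the derivative of $G\circ f\circ\alpha$ with $G(s)=\int_c^s\btransnormal(\sigma)^{-1/2}\,d\sigma$. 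Your eikonal-type estimate is more self-contained: it needs neither the existence of a minimizer nor the identification of minimizers with flow lines, and it handles non-monotone $f\circ\alpha$ automatically once the curve is truncated to stay in $f^{-1}([c,d])$, since one is integrating an exact derivative.

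There is, however, one concrete gap. You fix $q\in f^{-1}(d)$ at the outset, but the curves you exhibit of length $\int_c^d\btransnormal(s)^{-1/2}\,ds$ are gradient flow lines launched from points of $f^{-1}(c)$, and nothing in your argument shows that one of them terminates at the given $q$. Consequently the sentence ``this proves the claimed upper bound $d(f^{-1}(c),q)\le \int_c^d \btransnormal(s)^{-1/2}\,ds$'' is unjustified as written; what you have actually proved is $d(f^{-1}(c),f^{-1}(d))\le\int_c^d\btransnormal(s)^{-1/2}\,ds$ together with $d(f^{-1}(c),q)\ge\int_c^d\btransnormal(s)^{-1/2}\,ds$ for every $q$. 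To close this you need either (i) to run the flow of $-\nabla f/F(\nabla f)$ backwards from $q$ and show it reaches $f^{-1}(c)$, which requires ruling out finite-time escape of the backward flow --- a point where forward completeness alone is not obviously sufficient --- or (ii) the paper's device: take a minimizing unit-speed geodesic from $f^{-1}(c)$ to $q$, observe that it stays in $f^{-1}([c,d])$ and must leave $f^{-1}(c)$ orthogonally in the direction of $\nabla f$ (codimension one plus Remark \ref{remark-transnormal-FR}), and conclude from uniqueness of geodesics that it is one of your flow lines, hence has length exactly $\int_c^d\btransnormal(s)^{-1/2}\,ds$. Option (ii) is the cheaper fix and is exactly the step your proposal omits.
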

\begin{proof}
Set $U=f^{-1}([c,d])$. Then from Lemma \ref{lemma-transnormal-RF1} we conclude that
$$ \hat{g}(\widehat{\nabla}f,\widehat{\nabla}f)=\btransnormal\circ f $$
on $U$. In other words $f$ is also a transnormal function with respect to $\hat{g}$ (with the same $\btransnormal$). 
We are going to  use classical results about  Riemannian transnormal  function, recall
\cite{QMWang} and  \cite[Chapter 5]{AlexBettiol}.
Let $\alpha$ be an integral curve of $\widehat{\nabla} f$ starting at some point of $p\in f^{-1}(c)$
and $\beta$ its arc-lenght reparametrization. Then
\begin{itemize}
\item $\beta$ is a horizontal unit speed geodesic (with respect to $\hat{g}$),
\item $\beta|_{[0,r]}$ joins $f^{-1}(c)$ to $f^{-1}(d)$, where  $r=\int_{c}^{d}\frac{d s}{\sqrt{\btransnormal(s)} }$,
\item $\beta$ meets each regular level set just once. 
\end{itemize} 
From Lemmas \ref{lemma-transnormal-RF2} and \ref{lemma-transnormal-RF1} and Remark 
\ref{remark-transnormal-FR} we see that  $\beta$ is also the  arc-lenght reparametrization of the 
 integral curve of $\nabla f$ and also fulfills the  properties described above  for the Finsler metric $F$. 

Finally consider a segment of unit speed geodesic $\gamma$ joining $f^{-1}(c)$ to a point $q\in f^{-1}(d)$ 
realizing the distance between them. 
Then it is not difficult to see that $\gamma$ is contained in $U$, it meets $f^{-1}(c)$ just at one point and at this point the velocity of $\gamma$  has the same directions as  $\nabla f$. 
From the unicity   of geodesics we conclude that  $\gamma$  must coincide with one of the segments $\beta$ defined above and this conclude the proof.

\end{proof}

\begin{definition}
As we have seen above, given a transnormal function $f:M\to\mathbb{R}$, 
the integral curves of the vector field  $\nabla f$ (i.e., the gradient flow), 
when parameterized by arc-length is a geodesic. This segment of geodesic is called \emph{$f$-segment}. 
\end{definition}

\begin{remark}[Analyticity]
\label{remark-analyticity}
Assume that $f:M\to \mathbb{R}$ is an analytic function on an analytic manifold $M$. Then, as usual, local properties can be extended. For example assume that $f$ is a transnormal function in a neighborhood of a point $p$ of regular leaf $f^{-1}(c)$. 
Set $g(s,t)=f\big(\exp_{\beta(s)}(t\xi)\big)- f\big(\exp_{p}(t\xi) \big)$ where $\xi=\frac{\nabla f}{F(\nabla f)}$ and 
$s\to \beta(s)\in f^{-1}(c)$ is a curve such that $\beta(0)=p$. Note that  $g(s,t)=0$ for small $s$ and $t$ because $f$ is transnormal in a neighborhood of $p$. By analyticity of $f$ we conclude that the function $g$ is always zero, i.e., regular level sets are forward parallel. This and other quite similar straightforward arguments will be extensively used  in the next section.
\end{remark}


\section{ Question \ref{question-2}  and proof of Theorem   \ref{finsler-partition} }
\label{section-question2}

Let us first sketch the idea of the proof of Theorem \ref{finsler-partition}.  
First we are going to show that there exists a neighborhood $U_0$ of $f^{-1}(b)$ so that $\F$ restricted to $U_{0}\setminus f^{-1}(b)$ 
is a Finsler foliation, see Lemma \ref{lemma-neighborhood-finsler}.
This will be proved using the  analyticity of $f$, the fact
that regular level sets, future and past cylinder have codimension 1 and Lemma \ref{lemma-equidistant}.
Once we have assured that $\F$ is a Finsler foliation on $U_{0}\setminus f^{-1}(b)$, we will apply index-Morse arguments from \cite[Theorem 5.63]{AlexBettiol} to conclude that $f^{-1}(b)$ is in fact a submanifold, see Lemma \ref{lemma-critical-levelsets-submanifold}.  
Finally analyticity  will allow us to extend the property \emph{of being a Finsler partition on $U_{0}$}  to whole  $M$. 

Now let us give a few more details about the proof through the next two lemmas and  a series of claims.

\begin{lemma}
\label{lemma-neighborhood-finsler}
There exists a neighborhood $U_{0}$ of the critical level set  $f^{-1}(b)$  where $\F$ 
fulfills the following propery: if $x\in U_{0}\setminus f^{-1}(b)$ and $\gamma$ 
is a geodesic so that $\gamma(0)=x$ and $\gamma'(0)$ is orthogonal to the level set that contains $x$, 
then $\gamma$ is orthogonal to all regular level sets of  $M$ it meets. 
\end{lemma}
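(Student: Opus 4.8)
\textbf{Proof strategy for Lemma \ref{lemma-neighborhood-finsler}.}
The plan is to produce the neighborhood $U_0$ by a local analysis near a point $p\in f^{-1}(b)$, using the $f$-segments (arc-length gradient trajectories) to flow the regular leaves toward the singular leaf, and then to promote the local equidistance statement to the statement of the lemma using the analyticity of $f$. First I would fix $p\in f^{-1}(b)$ and, since $b$ is the only singular value in a one-sided neighborhood $(b-\epsilon,b]$, observe that $f$ restricted to $f^{-1}((b-\epsilon,b))$ has no critical points, so by Lemma \ref{lemma-transnormal-RF1} and Proposition \ref{proposition-forward-equidistant} the regular level sets $\{f^{-1}(c)\}_{c\in(b-\epsilon,b)}$ form a forward-parallel family, and the $f$-segments are horizontal unit-speed geodesics meeting each regular level exactly once. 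The point is that each such $f$-segment, being a geodesic in the forward-complete manifold, extends to $t$ with $f\circ\beta(t)\to b$, and (after possibly shrinking) lands on $f^{-1}(b)$; this realizes a neighborhood $U_0^-$ of $f^{-1}(b)$ on the ``past'' side as the image under the exponential map of the orthogonal $f$-segments emanating from a fixed regular leaf $f^{-1}(c_0)$.

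Next I would argue local equidistance. Take $x\in U_0\setminus f^{-1}(b)$ lying on a regular level $f^{-1}(c)$, and let $\gamma$ be a geodesic with $\gamma(0)=x$ and $\gamma'(0)$ orthogonal to $f^{-1}(c)$. By Remark \ref{remark-transnormal-FR} the gradient $\nabla f$ is orthogonal to $f^{-1}(c)$, and since $f^{-1}(c)$ is a hypersurface its orthogonal cone $\nu_x f^{-1}(c)$ is one-dimensional-ish in the sense that in each of the two ``components'' the orthogonal direction is proportional to $\pm\nabla f$ at $x$ (orthogonality with respect to $g_v$ is not symmetric, but the solvability of $g_v(v,u)=0$ for all $u\in T_xf^{-1}(c)$ still forces $v$ to lie on one of at most two rays, by a codimension count: the condition is $\dim f^{-1}(c)=n-1$ linear conditions on the $n$-dimensional indicatrix). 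Hence $\gamma$ is, up to reparametrization and direction, an $f$-segment, and the forward-parallel conclusion of Proposition \ref{proposition-forward-equidistant} applied on $f^{-1}((b-\epsilon,b))$ shows $\gamma$ stays orthogonal to every regular level it meets while it remains in $f^{-1}((b-\epsilon,b))$. The remaining issue is what happens when $\gamma$, continued as a geodesic, crosses $f^{-1}(b)$ and re-enters the regular region on the same side, or when $\gamma$ starts out pointing ``away'' from $f^{-1}(b)$; here I would invoke analyticity exactly as in Remark \ref{remark-analyticity}: the function $g(s,t)=f(\exp_{\beta(s)}(t\xi))-f(\exp_p(t\xi))$ (with $\xi$ the unit gradient direction, $\beta$ a curve in a regular leaf through the relevant point) vanishes for small $s,t$ by the local forward-parallel statement, hence vanishes identically, which is precisely orthogonality of $\gamma$ to every regular level it meets, with no restriction to a one-sided interval.

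I expect the main obstacle to be the asymmetry of Finsler orthogonality near the singular leaf: I must be careful that ``$\gamma'(0)$ orthogonal to the level through $x$'' pins $\gamma$ down to an $f$-segment (in one of the two admissible directions) rather than allowing a continuum of orthogonal directions, and I must make sure the $f$-segments actually sweep out a full neighborhood $U_0$ of $f^{-1}(b)$ rather than missing part of it --- this is where the hypothesis that the level sets are connected, together with the compactness of $M$ giving a uniform lower bound on the ``collar width'' of the $f$-segments, does the work. A secondary technical point is checking that the $f$-segments issued orthogonally from a regular leaf $f^{-1}(c_0)$ reach $f^{-1}(b)$ at finite arc-length $\int_{c_0}^{b}\btransnormal(s)^{-1/2}\,ds$ and that this integral is finite, which follows since $\btransnormal$ is continuous and positive on $[c_0,b)$ and, near $b$, behaves like the square of the distance to the critical leaf (a standard Morse-type estimate made rigorous by comparison with the model $\widehat F$-metric of Lemma \ref{lemma-transnormal-RF1}); once $U_0$ is exhibited as such a collar, the analyticity extension of Remark \ref{remark-analyticity} closes the argument.
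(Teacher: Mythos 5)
Your forward-direction analysis is essentially right and matches the paper: the future-pointing orthogonal ray at a regular point is the $\nabla f$-ray, the corresponding geodesic is an $f$-segment, and Proposition \ref{proposition-forward-equidistant} plus analyticity (Remark \ref{remark-analyticity}) handle it. The genuine gap is the \emph{past}-pointing orthogonal direction. You correctly note that the orthogonal cone to a regular level set consists of two rays, but the second ray is \emph{not} $-\nabla f$ (Finsler orthogonality is not symmetric), and the geodesic it generates is not a reversed $f$-segment. For that geodesic the statement to be proved is exactly backward equidistance of the level sets, which is the property that \emph{fails} in the motivating example of the introduction (the Randers distance function on a Minkowski space, where $f^{-1}(c_1)$ is forward but not backward parallel to $f^{-1}(c_2)$). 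Your plan to "invoke analyticity exactly as in Remark \ref{remark-analyticity}" cannot close this: analytic continuation propagates an identity that already holds on an open set, and nothing in your argument produces an open set on which the past-orthogonal geodesics are known to stay orthogonal. The seed has to come from the singular leaf, and this is precisely what the paper's proof supplies: it stratifies $f^{-1}(b)$ by Lojasiewicz's theorem (since $f^{-1}(b)$ is not yet known to be a manifold at this stage --- another point your collar construction glosses over), takes a plaque $P_\alpha$ in a stratum of locally largest dimension, and shows (Claims \ref{claim0-main}--\ref{claim3-main}) that both the past cylinders $C^{-}_{r_c^-}(P_\alpha)$ and the future cylinders $C^{+}_{r_c^+}(P_\alpha)$ are open subsets of regular level sets, by a codimension-one count. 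Only then does Lemma \ref{lemma-equidistant} give local forward \emph{and} backward equidistance near $P_\alpha$, which analyticity can propagate.

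Two smaller points. First, your heuristic that $\btransnormal$ "behaves like the square of the distance to the critical leaf" is unnecessary and unproved; finiteness of $\int_{c_0}^{b}\btransnormal(s)^{-1/2}\,ds$ follows from compactness of $M$, since this integral equals $d\bigl(f^{-1}(c_0),f^{-1}(c)\bigr)\le \mathrm{diam}(M)$ for all $c<b$ by Proposition \ref{proposition-forward-equidistant}. Second, even granting your collar $U_0^-$, you would still need the argument that the $f$-segments reaching $f^{-1}(b)$ all arrive at the common time $r_c^-=d(f^{-1}(c),f^{-1}(b))$ (the content of Claim \ref{claim0-main}), which again rests on analyticity applied to a minimizing geodesic realizing the distance, not merely on the forward-parallelism of the regular leaves among themselves.
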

\begin{proof}

\begin{claim}
\label{claim0-main}
  $C^{-}_{r_{c}^{-}}\big( f^{-1}(b)\big)=f^{-1}(c)$, for each $c<b$ 
and  $r_{c}^{-}=d(f^{-1}(c),f^{-1}(b))$.
\end{claim}

In fact let $x_{0}\in f^{-1}(b)$ be a point so that $d(f^{-1}(c),x_{0})=d(f^{-1}(c),f^{-1}(b))=r_{c}^{-}$. 
Let $\gamma:[0,r_{c}^{-}]\to M$ be a unit speed geodesic  so that $\gamma(0)\in f^{-1}(c)$ and $\gamma(r_{c}^{-})=x_{0}$. 
Note that  $\gamma$  is an extension of an $f$-segment and minimize the distance. These facts and the analyticity imply that  
each $f$-segment  starting at $f^{-1}(c)$  meets $f^{-1}(b)$ at the first time at 
$t=r_{c}^{-}$ and this implies that $f^{-1}(c)\subset C^{-}_{r_{c}^{-}}\big( f^{-1}(b)\big)$.
Now consider $x\in C^{-}_{r_{c}^{-}}\big( f^{-1}(b)\big)$ and $\tilde{c}=f(x).$
From  what we have discussed  before we have that $f^{-1}(\tilde{c})\subset C^{-}_{r_{\tilde{c}}^{-}}\big( f^{-1}(b)\big)$. Therefore
$r_{c}^{-}=r_{\tilde{c}}^{-}=r$. 
 Assume by contradiction, that $f(x)=\tilde{c}<c$. Let $\gamma:[0,r]\to M$ be a minimal unit speed geodesic 
 joining $\gamma(0)\in f^{-1}(\tilde{c})$ to $\gamma(r)\in f^{-1}(b)$. The fact that the regular leaves have codimension one allows us to conclude that $\gamma$  is an $f$-segment and cross $f^{-1}(c)$ at time $t<r$ what is a contradiction with the fact that 
$f^{-1}(c)\subset C^{-}_{r}\big( f^{-1}(b)\big)$. A similar contradiction happens if one supposes  that $f(x)=\tilde{c}>c$. Therefore
$f(x)=\tilde{c}=c$ i.e., $C^{-}_{r_{c}^{-}}\big( f^{-1}(b)\big)\subset f^{-1}(c)$ and this concludes the proof of Claim \ref{claim0-main}.

From Lojasiewicz's  Theorem (recall \cite[Theorem 6.3.3]{Krantz-Parks}) we know that  
the level set $f^{-1}(b)$ is  stratified into  submanifolds. 
Let $\Sigma$ denote a (connected) stratum with local larger  dimension, i.e., if $x\in \Sigma$ then there is a neighborhood $U$ of $x$
so that the only components of $f^{-1}(b)\cap U$ are components of  $\Sigma.$   
For each $x_{\alpha}\in \Sigma\subset f^{-1}(b)$ consider  a relatively compact neighborhood 
$P_{\alpha}\subset \Sigma$ of $x_\alpha$ so that $\overline{P_{\alpha}}$ is in the interior of $\Sigma$ and 
$C^{-}_{r_{c}^{-}}\big( P_{\alpha} \big)=C^{-}_{r_{c}^{-}}\big( f^{-1}(b)\big)\cap U$ for some neighborhood $U$
of $x_{\alpha}$ and for $c$ close to $b$. By using Claim \ref{claim0-main} we infer the next claim.

\begin{claim}
\label{claim1-main}
For each $c$ close to $b$ the past cylinder 
$C^{-}_{r_{c}^{-}}\big( P_{\alpha} \big)$ is an open set of $f^{-1}(c).$
\end{claim}

The above claim and the   analyticity of $f$ imply:
\begin{claim}
\label{claim2-main}
let $\gamma_{u_{i}}$  be the unit speed geodesic 
with $\gamma_{u_{i}}'(0)=u_{i}$, for   $u_{1}, u_{2} \in \nu^{1}(P_{\alpha})$.  
Then
\begin{enumerate}
\item[(a)] $f\big(\gamma_{u_{1}}(t)\big)=f\big(\gamma_{u_{2}}(t)\big)$, for
$t\in\mathbb{R}$
\item[(b)] $\gamma_{u_{i}}$ is orthogonal to each regular level
set of $f$. 
\end{enumerate}
\end{claim}

Let $\gamma$ be a unit speed geodesic  orthogonal
to $P_{\alpha}$. It is not difficult to see that there exists a $c_0$ so that for each $c\in[c_{0},b)$ there
exists   $r_{c}^{-}>0$ and $r_{c}^{+}>0$  so that
$f\big(\gamma(-r_{c}^{-})\big)=c
=f\big(\gamma(r_{c}^{+}) \big).$
From Claim \ref{claim2-main} one can infer that, for each other unit speed geodesic $\gamma_{u}$  orthogonal 
to $P_{\alpha}$, we have that $\gamma_{u}(r_{c}^{+})\in f^{-1}(c)$, and hence $C_{r_{c}^{+}}^{+} (P_{\alpha})\subset f^{-1}(c)$. This fact and the fact that 
  $C_{r_{c}^{+}}^{+} (P_{\alpha})$ and $f^{-1}(c)$ have  codimension 1  imply that 
\begin{claim}
\label{claim3-main} There exists $c_0$ so that 
$C_{r_{c}^{+}}^{+} (P_{\alpha})$ is an open set of $f^{-1}(c)$
for each $c\in [c_{0},b)$.
\end{claim}

Lemma \ref{lemma-equidistant}, Claims \ref{claim1-main} and \ref{claim3-main} imply the next claim. 

\begin{claim}
\label{claim4-main}
There exists a neighborhood $U_{\alpha}$ of $P_{\alpha}$  where $\F$ 
fulfills the following propery: if $x\in U_{\alpha}\setminus P_{\alpha}$ and $\gamma$ 
is a geodesic so that $\gamma(0)=x$ and $\gamma'(0)$ is orthogonal to the level set that contains $x$, 
then $\gamma$ is orthogonal to all regular level sets of  $M$ it meets. 

\end{claim}

Let $U_0$ be the saturation of $U_{\alpha}$. Claim \ref{claim4-main} and analyticity of $f$ imply that  
$U_{0} \setminus f^{-1}(b)$ also fulfills the property of 
Claim \ref{claim4-main} and in particular $\F$ restricted to $U_{0}\setminus f^{-1}(b)$ is a Finsler foliation, as we wanted to prove. 

\end{proof}

\begin{lemma}
\label{lemma-critical-levelsets-submanifold}
$f^{-1}(b)$ is an embedded submanifold.
\end{lemma}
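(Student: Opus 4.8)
The plan is to show that $f^{-1}(b)$ is an embedded submanifold by proving that near each point it is a union of $f$-segment endpoints emanating orthogonally from a single plaque $P_\alpha$ of the top-dimensional stratum $\Sigma$, and that the normal geodesics all hit $f^{-1}(b)$ at the same time with the correct transversality. Concretely, I would fix a plaque $P_\alpha \subset \Sigma$ as in the proof of Lemma~\ref{lemma-neighborhood-finsler} and pick a regular value $c$ close to $b$; by Claim~\ref{claim0-main} and Claim~\ref{claim2-main}, every unit-speed geodesic orthogonal to $P_\alpha$, run backward (in the reverse metric direction), first meets $f^{-1}(c)$ at a uniform time $r_c^{-}$, and all such geodesics sweep out an open subset $C^{-}_{r_c^{-}}(P_\alpha)$ of $f^{-1}(c)$ which is itself an embedded hypersurface. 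The idea is then to transport the smooth structure backward: the future cylinder $C^{+}_{r_c^{-}}(C^{-}_{r_c^{-}}(P_\alpha))$ — i.e. running the $f$-segments forward from the regular level $f^{-1}(c)$ for time $r_c^{-}$ — recovers exactly $P_\alpha$'s neighborhood in $f^{-1}(b)$, because by Proposition~\ref{proposition-forward-equidistant} the $f$-segments are horizontal geodesics realizing the distance between consecutive levels, and analyticity (Remark~\ref{remark-analyticity}) forces them to all land on $f^{-1}(b)$ simultaneously.

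The key steps, in order: (i) use Lemma~\ref{lemma-neighborhood-finsler} to know that on $U_0 \setminus f^{-1}(b)$ the partition $\F$ is a Finsler foliation, so each $f$-segment is a horizontal geodesic and the flow of $\nabla f / F(\nabla f)$ carries one regular level diffeomorphically onto another; (ii) fix a regular level $f^{-1}(c)$ inside $U_0$ with $c$ close to $b$, and consider the map $\Phi(x) = \exp_x\big(r_c^{-}\,\xi(x)\big)$ where $\xi = \nabla f / F(\nabla f)$, defined on the open set $C^{-}_{r_c^{-}}(P_\alpha) \subset f^{-1}(c)$; by Claim~\ref{claim2-main}(a) this lands in $f^{-1}(b)$, and it is smooth since $\exp$ is smooth away from the zero section and $\xi$ is smooth on the regular part; (iii) show $\Phi$ is an injective immersion onto a neighborhood of $x_\alpha$ in $f^{-1}(b)$ — injectivity follows from the fact that distinct $f$-segments cannot cross (they meet each regular level once, Claim~\ref{claim2-main}, and minimize distance), surjectivity onto a neighborhood follows from the stratification together with the choice of $\Sigma$ as the top-dimensional stratum and Claim~\ref{claim0-main} which identifies $f^{-1}(c)$ with a past cylinder over $f^{-1}(b)$; (iv) conclude that $\Phi$ gives an embedded chart, and since $x_\alpha \in \Sigma$ was arbitrary in the top stratum and, by Claim~\ref{claim0-main}, \emph{every} point of $f^{-1}(b)$ is the endpoint of some distance-minimizing $f$-segment from $f^{-1}(c)$, the lower-dimensional strata are actually empty, so $f^{-1}(b)$ is globally an embedded submanifold; (v) finally invoke the index-form/Morse arguments of \cite[Theorem~5.63]{AlexBettiol} applied to the function $f$ restricted to the Finsler-foliated collar, as the paper's sketch indicates, to rule out the pathological case where $\Phi$ fails to be an immersion (i.e.\ where focal behavior would collapse dimension) — equivalently, to show $r_c^{-}$ is below the first focal radius so $d\Phi$ stays injective.

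The main obstacle I expect is step (iii)–(v): controlling the rank of $d\Phi$, i.e.\ ruling out that the orthogonal $f$-segments from $f^{-1}(c)$ focus before reaching $f^{-1}(b)$, which would make $f^{-1}(b)$ a lower-dimensional set rather than a hypersurface's worth of endpoints. In the Riemannian case this is exactly the content of the Morse-theoretic argument in \cite[Theorem~5.63]{AlexBettiol} — the critical submanifold is nondegenerate in the Morse–Bott sense and its index/nullity pin down the dimension — and here one must check that passing to the auxiliary Riemannian metric $\hat g = g_{\nabla f}$ on the regular collar (via Lemmas~\ref{lemma-transnormal-RF1} and~\ref{lemma-transnormal-RF2}) preserves enough of this structure, since $f$ is $\hat g$-transnormal with the same $\btransnormal$ and the $f$-segments are $\hat g$-geodesics too. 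The subtlety is that $\hat g$ is only defined away from the critical set, so the Morse–Bott analysis at $f^{-1}(b)$ cannot be read off from $\hat g$ directly and must instead be obtained as a limit/closure statement, using analyticity of $f$ to guarantee that the limiting behavior of the $\hat g$-level sets as $c \to b$ is well-controlled; this is precisely where the hypothesis that $M$ and $f$ are analytic does the real work.
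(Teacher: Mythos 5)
There is a genuine gap, and it lies exactly where you flagged your ``main obstacle'': steps (iii)--(v) aim at the wrong conclusion. You try to show that $\Phi=\eta_{r_c^-\xi}$ is an injective \emph{immersion} onto a neighborhood in $f^{-1}(b)$, i.e.\ to ``rule out'' focusing of the orthogonal $f$-segments before they reach $f^{-1}(b)$. But this focusing is not a pathology to be excluded --- it is the generic and expected behavior: the critical level set of a transnormal function is typically a focal variety of codimension greater than one (compare hypothesis (b) of Proposition~\ref{transnormal-SRF}, and the model case of a distance function whose critical level is a single point). Consequently $d\Phi$ does in general have nontrivial kernel at time $r_c^-$, your step (v) cannot succeed, and your step (iv) claim that ``the lower-dimensional strata are actually empty, so $f^{-1}(b)$ is globally an embedded submanifold'' does not follow from surjectivity of the endpoint map: the image of a smooth map from a hypersurface need not be a hypersurface.

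What the argument actually requires --- and what the paper's proof does --- is to show that $d\eta_{r_c^-\xi}$ has \emph{constant} (possibly non-maximal) rank along the connected level $f^{-1}(c)$, and then invoke the rank theorem to conclude that the image is an immersed submanifold of dimension equal to that rank. The constant-rank statement is obtained by combining two semicontinuities: since $\eta_{t\xi}$ is a diffeomorphism between regular levels for $t$ slightly less than and slightly greater than $r_c^-$ (your step (ii) gives only half of this; one also needs to push \emph{through} the critical level to $t\in(r_c^-,r_c^-+\epsilon)$), the only focal time on $[0,r_c^-+\epsilon]$ is $t=r_c^-$, so the total multiplicity of focal points along $\gamma_x$ equals $\dim\ker d(\eta_{r_c^-\xi})_x$; the Morse index theorem makes this quantity lower semicontinuous in $x$, while $\dim\ker$ of a smoothly varying linear map is upper semicontinuous, forcing local constancy and hence constancy by connectedness of $f^{-1}(c)$. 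This is carried out with Finsler Jacobi fields directly (not via the degenerating auxiliary metric $\hat g=g_{\nabla f}$, whose breakdown at the critical set you correctly worry about but do not resolve). Embeddedness then follows from the identification of $f^{-1}(c)$ with the past cylinder over $f^{-1}(b)$ (Claim~\ref{claim0-main}) together with surjectivity of the endpoint map onto $f^{-1}(b)$, not from an immersion chart as in your step (iii).
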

\begin{proof}

Let $\eta_{t\xi}:f^{-1}(c)\to M$ be the map defined as $\eta_{t\xi}(x)=\exp_{x}(t\xi)$ where 
$\xi=\frac{\nabla f}{F(\nabla f)}$. We will call this kind of map as an \emph{end point map pointing in the direction of $\xi$.}

\begin{claim}
\label{claim-5main}
There exists an $\epsilon>0$ so that  $\eta_{t\xi}:f^{-1}(c)\to f^{-1}(d)$  
is a diffeomorphism between regular level sets, for each $t\in (0,r_{c}^{-}) \cup (r_{c}^{-}, r_{c}^{-} +\epsilon)$ 
and $c$ close to $b$ (e.g., $c>c_{0}$).
\end{claim}	
In fact from  analyticity it is easy to see that  $\eta_{t\xi}:f^{-1}(c)\to f^{-1}(d)$.  
In order to prove that it is a diffeomorphism, it suffices to 
construct the smooth inverse. Let $\tilde{\xi}$ be the normal vector field along $f^{-1}(d)$ pointing in the 
opposite (resp. same) direction of 
$\nabla f$ if $t\in (r_{c}^{-}, r_{c}^{-} +\epsilon)$ (resp. if $t\in (0,r_{c}^{-})$). 
Define $t\to \gamma(t)$ as $\gamma(t)=\exp(t\tilde{\xi})$ and define the \emph{end past map}
 $\eta_{t}^{-}:f^{-1}(d)\to M$ as $\eta_{t}^{-} (x):=\gamma(-t) $. 
Analyticity and  Lemma \ref{lemma-neighborhood-finsler} imply that the map   $\eta_{t}^{-}:f^{-1}(d)\to f^{-1}(c)$
 is the inverse of $\eta_{t\xi}$.

	\begin{claim}
	\label{claim-6main}
 		The derivative of map $\eta_{r^-_{c}\xi}:f^{-1}(c)\longrightarrow M$ has constant rank. 
 	\end{claim}
The idea of the proof is based on \cite[Theorem 5.63]{AlexBettiol}. 
Let us briefly recall it, accepting results on Jacobi field on Finsler spaces; 
see \cite{Javaloyes-Soares} and \cite{I-R-Peter}. 
For $p\in f^{-1}(c)$, consider the geodesic  $t\to \gamma_{p}(t)=\exp_p(t\xi)$. 
Since $f^{-1}(c)$ is a hypersurface, we can infer that the point 
$\gamma_{p}(t)$ is a $f^{-1}(c)$-focal point of multiplicity $k$ 
if and only if $p$ is a critical point of $\eta_{t\xi}$ and $\dim\ker d(\eta_{t\xi})_p=k$. 
Furthermore, for the appropriate choice of $\epsilon>0$,    
  Claim \ref{claim-5main} implies that if  $t\in I=[0,r^-_{c}+\epsilon]$, 
then  $\eta_{t\xi}$  may only fail to be an immersion if $t=r^-_{c}.$
These two facts together imply that for every $x\in f^{-1}(c),$ 
$$m(\gamma_x)=\dim\ker d(\eta_{r^-_{c}\xi})_{x},$$
where $m(\gamma_x)$ denotes the number of focal points on $\gamma_{x}$ counted with multiplicities on
$\gamma_{x}|_{I}$.
 From Morse Index  $$m(\gamma_p)\leq m(\gamma_x),$$ for  $x\in f^{-1}(c)$ near to $p$. 
Since $$\dim\ker d(\eta_{r^-_{c}\xi})_p\geq \dim\ker d(\eta_{r^-_{c}\xi})_x,$$ for  $x\in f^{-1}(c)$,
we conclude that  
$$\dim\ker d(\eta_{r^-_{c}\xi})_p=  \dim\ker d(\eta_{r^-_{c}\xi})_x,$$ for $x$ near to $p$.
This and the  connectivity of $f^{-1}(c)$ finish  the proof of Claim  \ref{claim-6main}.

\begin{claim}
\label{claim-7main}
The  map $\eta_{r^-_{c}\xi}:f^{-1}(c)\rightarrow f^{-1}(b)$ is  surjective and $f^{-1}(b)$ is an immersed submanifold.  
\end{claim}
In fact, from Claim \ref{claim-6main} and Claim \ref{claim1-main} we infer that $\rank d \eta_{r^-_{c}\xi}=\dim \Sigma$. 
This fact, Claim \ref{claim1-main}, connectivity argument and definition of stratification imply that
$\overline{\Sigma}\subset \eta_{r^-_{c}\xi}\big(f^{-1}(c)\big).$ Note that this also holds for each other stratum of $f^{-1}(b)$ 
with local larger  dimension. Therefore from definition of stratification we conclude that the map
$\eta_{r^-_{c}\xi}:f^{-1}(c)\rightarrow f^{-1}(b)$ is surjective.  
From rank theorem we deduce  that   $f^{-1}(b)$ is an immersed submanifold (with possible intersections).

Claims \ref{claim0-main} and \ref{claim-7main} imply that $f^{-1}(b)$ is an embedded submanifold, as we wanted to prove.

\end{proof}


Now we want to extend the property of Lemma \ref{lemma-neighborhood-finsler} to whole analytic manifold $M$. 

 Let $\gamma:[0,r]\to M$ be a fixed segment of geodesic  joining $f^{-1}(b)$ to $f^{-1}(a)$  so that 
 $r=d(f^{-1}(b), f^{-1}(a))$.

Consider a partition $0=t_{0}<t_{1}<\cdots < t_{n-1}< t_{n}=r$, a finite covering of $[0,r]$ by open intervals $I_{i}$ ($i=0\cdots n$) 
centered at $t_{i}$  such that $U_{i}:=f^{-1}(f(\gamma(I_{i})))$ is an open neighborhood of $f^{-1}(c_{i})$ (where $c_{i}=f(\gamma(t_{i}))$)
contained in the  future and past neighborhoods of $f^{-1}(c_{i})$.
Finally consider $\{ s_{i}\}_{i=0}^{n-1}$ so that  $0<s_{0}<t_{1}<s_{1}< t_{2}< s_{2} \cdots < s_{n-1}< t_{n}= r$  and $s_{i}\in I_{i}\cap I_{i+1}.$ Set 
$\tilde{c}_{i}=f(\gamma(s_{i}))$. Note that $f^{-1}(\tilde{c}_{0})$ is contained in the neighborhood $U_{0}\cap U_{1}$.
Therefore Lemma \ref{lemma-neighborhood-finsler} allows us to infer that the geodesics (starting at 
$f^{-1}(\tilde{c}_{0})$ pointing in the opposite direction of the gradient) arise orthogonally to $f^{-1}(c_{1})$.
Hence $f^{-1}(\tilde{c}_{0})$ is contained in the connected component $C^{-}$ 
of the  past cylinder $C^{-}_{r}(f^{-1}(c_{1}))$ of axis $f^{-1}(c_{1})$.
Therefore, since both have the same dimension, they coincide. 
On the other hand, the end point map
$\eta_{r\xi}: C^{-} \to f^{-1}(c_{1}) $ is a diffeomorphism, where 
$\xi$ is the unit normal vector along $ C^{-}$ pointing in the opposite direction of the gradient.
  Similarly, end point maps induce diffeomorphisms between $f^{-1}(c_{1})$ and (connected components of)  its future cylinders.  
These  facts together imply that  Lemma \ref{lemma-neighborhood-finsler} also holds in a neighborhood of $f^{-1}(\tilde{c}_{1})$. 
By induction we infer that Lemma \ref{lemma-neighborhood-finsler} is true in a neighborhood of $f^{-1}(\tilde{c}_{n-1})$.
Following the same proof of Lemmas \ref{lemma-neighborhood-finsler} and \ref{lemma-critical-levelsets-submanifold}
we conclude that Lemma \ref{lemma-neighborhood-finsler} holds in a neighborhood $U_{n}\setminus f^{-1}(a)$ and that the level
set $f^{-1}(a)$ is an embedded submanifold, finishing the proof of the theorem.


\section{Question \ref{question-3} } 
\label{section-question3}

In this section we approach Question \ref{question-3} and prove the next proposition.

\begin{proposition}
\label{transnormal-SRF}
		Let $(M,F)$ be a compact, connected and smooth Finsler manifold and 
		$f:M\to [a,b]$ be a smooth $F$-transnormal function with $F^2(\nabla f)=\mathfrak{b}(f)$, 
		where $\mathfrak{b}$ is a $C^1$ function on $[a,b]$. Suppose that: 
	\begin{itemize}
	  \item[(a)] the level sets are connected,  
		\item [(b)] the critical level sets $f^{-1}(a)$ and $f^{-1}(b)$ are submanifolds of  codimension greater than one.
	\item [(c)] $a$ and $b$ are the only singular values of $[a,b]$, 
	\item [(d)] $\mathfrak{b}'(a)\neq 0\neq \mathfrak{b}'(b).$
	\end{itemize}
Then there exists a Riemannian metric on $M$ such that $\mathcal{F}=\{f^{-1}(c)\}_{c\in [a,b]}$ is a singular Riemannian foliation. 
\end{proposition}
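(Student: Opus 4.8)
The plan is to build the Riemannian metric so that the Finsler transnormal function $f$ becomes a Riemannian transnormal function with a $C^2$ datum, and then to quote the theorem of C.~Qian and Z.~Tang \cite{qian2015isoparametric} identifying when such a function (equivalently a Morse--Bott function whose level sets form a foliation) is the level-set decomposition of a singular Riemannian foliation. First I would work on the regular part $M_{\mathrm{reg}}=f^{-1}((a,b))$. By Proposition \ref{proposition-forward-equidistant}, every $f$-segment, reparametrized by arclength, is a horizontal geodesic meeting each regular level set once, and by Lemma \ref{lemma-transnormal-RF1} the metric $\hat g:=g_{\nabla f}$ makes $f$ transnormal with the \emph{same} $\btransnormal$. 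Rather than use $\hat g$ directly (which need not extend across the singular sets), I would use the $f$-segments to set up a product structure: fixing a regular value $c_0$ and letting $\Phi:f^{-1}(c_0)\times(a,b)\to M_{\mathrm{reg}}$ send $(x,s)$ to the point of $f^{-1}(s)$ on the $f$-segment through $x$, I would define a Riemannian metric on $M_{\mathrm{reg}}$ of the form $\widetilde g = \frac{1}{\btransnormal(f)}\, df\otimes df + g_f$, where $g_f$ is a smooth family of metrics on the level sets chosen so that the $s$-curves $\Phi(x,\cdot)$ are $\widetilde g$-geodesics orthogonal to the levels and so that $\widetilde g$ is invariant under the holonomy/end-point identifications; for this one takes $g_f$ on $f^{-1}(s)$ to be the pushforward under the end-point diffeomorphism (Claim \ref{claim-5main}) of a fixed metric on $f^{-1}(c_0)$. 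With respect to $\widetilde g$, $f$ is transnormal with datum $\btransnormal$ and the regular levels are equidistant, so $\mathcal F|_{M_{\mathrm{reg}}}$ is a Riemannian foliation.

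The real work is extending $\widetilde g$ smoothly across the singular level sets $f^{-1}(a)$ and $f^{-1}(b)$, and this is where hypotheses (b) and (d) enter. Near $f^{-1}(a)$ I would change the ``radial'' coordinate: set $\rho:=\int_a^{f}\frac{ds}{\sqrt{\btransnormal(s)}}$, the distance to $f^{-1}(a)$ (Proposition \ref{proposition-forward-equidistant}); since $\btransnormal(a)=0$ but $\btransnormal'(a)\neq 0$ by (d), near $a$ one has $\btransnormal(s)\sim \btransnormal'(a)(s-a)$, hence $\rho\sim 2\sqrt{(s-a)/\btransnormal'(a)}$, so $\rho^2$ is a smooth function of $f$ vanishing to first order at $f^{-1}(a)$ with nonzero derivative — exactly the behaviour of the squared distance to a submanifold. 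In the coordinate $\rho$ the metric $\widetilde g$ becomes $d\rho^2 + g_\rho$ with $g_\rho$ a family of metrics on the $\rho$-spheres (the level sets), and the end-point maps give the normal-sphere-bundle structure over $f^{-1}(a)$ of codimension $\ge 2$ (hypothesis (b)). I would then invoke (or reprove in the Finsler-data setting) the standard fact that such a ``metric in polar form around a submanifold,'' with $g_\rho$ extending suitably as $\rho\to 0$, glues to a smooth Riemannian metric on a tubular neighbourhood of $f^{-1}(a)$ — this requires checking the Taylor expansion of $g_\rho$ in $\rho$ has only even terms and matches the induced metric on $f^{-1}(a)$ and its second fundamental form data, which is guaranteed because the whole local picture is built from the smooth end-point maps and the analytic/smooth structure of $f$. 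Doing the same near $f^{-1}(b)$ and patching with a partition of unity subordinate to $\{M_{\mathrm{reg}}$ thickened, tube around $f^{-1}(a)$, tube around $f^{-1}(b)\}$ — where on overlaps all the pieces already agree up to the freedom in $g_f$, which can be arranged to match — produces a global Riemannian metric $g_M$ on $M$ for which $f$ is transnormal with $C^1$ (indeed $C^\infty$) datum $\btransnormal$ and the level sets form a singular foliation.

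Finally, with $g_M$ in hand, $f:M\to[a,b]$ is a Riemannian transnormal function whose critical submanifolds are the minimum and maximum sets, so $f$ is a Morse--Bott function with exactly two critical manifolds, of codimension $\ge 2$ by (b); the regular level sets are connected by (a). This is precisely the hypothesis of the Qian--Tang characterization \cite{qian2015isoparametric}, which then yields that $\mathcal F=\{f^{-1}(c)\}_{c\in[a,b]}$ is a singular Riemannian foliation of $(M,g_M)$ — equivalently one checks directly that the equidistance established on the regular part, together with the submanifold structure of $f^{-1}(a),f^{-1}(b)$, makes every $g_M$-geodesic orthogonal to one leaf orthogonal to all leaves it meets, and that each tangent vector to a leaf extends to a smooth foliated vector field (using the end-point maps to produce such fields near the singular strata). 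I expect the main obstacle to be the smooth gluing of the metric across the singular level sets: verifying that the family $g_\rho$ coming from the end-point maps has the correct even Taylor behaviour in $\rho$ so that $d\rho^2+g_\rho$ is genuinely smooth at $\rho=0$ — this is exactly the point where condition (d), $\btransnormal'(a)\neq 0\neq\btransnormal'(b)$, is indispensable (it forces the ``distance squared'' $\rho^2$ to be a smooth function of $f$ with nonvanishing differential), and condition (b) ensures the singular set is not a hypersurface, so that $\rho$-polar coordinates around it make sense.
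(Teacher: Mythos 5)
Your route is genuinely different from the paper's, and the difference is exactly where the gap lies. The paper never constructs the Riemannian metric by hand: it verifies that $f$ is a Morse--Bott function and then delegates the entire construction to Theorem \ref{thm-morse-bott} (Qian--Tang), after which Wang's theory yields the singular Riemannian foliation. The verification of the Morse--Bott property is where hypothesis (d) actually enters: one computes $\mathrm{Hess}^{F}f(\nabla f,\nabla f)=\tfrac12\mathfrak{b}'(f)\mathfrak{b}(f)$ on the regular part (Lemmas \ref{lemma-hessf-1} and \ref{lemma-hessf-2}), normalizes along the unit gradient, and passes to the limit at a critical point $p$ to obtain $\mathrm{Hess}f_{p}(X,X)=\tfrac{\lambda^{2}}{2}\mathfrak{b}'(b)\neq 0$ for $X$ transversal to $f^{-1}(b)$; together with $T_{p}f^{-1}(b)=\ker\mathrm{Hess}f_{p}$ this is the required transversal nondegeneracy. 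You never establish this: you only assert that once your metric $g_{M}$ exists, $f$ ``is'' Morse--Bott, and you use (d) solely for the asymptotics of $\rho$ versus $f$.

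Moreover, the step you flag as ``the main obstacle'' --- smoothness of $d\rho^{2}+g_{\rho}$ at $\rho=0$ --- is not a deferrable technicality: it is essentially the content of the theorem you should be quoting, and your construction as described does not deliver it. Pushing forward a fixed metric on $f^{-1}(c_{0})$ by the end-point diffeomorphisms keeps the ``angular'' part of $g_{\rho}$ bounded away from zero as $\rho\to 0$; in the model case $M=\mathbb{R}^{2}$, $f=x^{2}+y^{2}$, this yields $d\rho^{2}+d\theta^{2}$, a cone metric that is not smooth at the origin. To repair it one must force $g_{\rho}$ to collapse the sphere fibers of the normal bundle of $f^{-1}(a)$ at rate $\rho^{2}$ with the correct even jet, which is precisely what the Qian--Tang construction arranges. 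A secondary problem: even granting the glued metric, your transnormal datum is the given $C^{1}$ function $\mathfrak{b}$, whereas the step that converts ``Riemannian transnormal'' into ``singular Riemannian foliation'' is Wang's theorem and requires a $C^{2}$ datum --- the Qian--Tang result is an existence statement for a metric, not a criterion applicable to a metric you have already built. The clean fix is to drop the metric construction entirely, prove the Morse--Bott property as above, and then invoke Theorem \ref{thm-morse-bott} together with Wang's results.
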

\begin{remark}
As discussed by Wang \cite{QMWang}, 
conditions (c) and (d) above are satisfied by a 
Riemannian transnormal function
if  $\btransnormal \in C^{2}[a,b]$, and these are important conditions 
e.g, there exist examples of (Riemannian) transnormal functions where (d) is not satisfied and
the level sets  of $f$ are not even leaves of a   singular  foliations. 
The problem in the Finsler case is that the assumption   $\btransnormal \in C^{2}[a,b]$  
 seems  to be too strong, recall Remark \ref{remark-C2} and Example \ref{ex-C2}.
Therefore it remains for us to assume (c) and (d) as hypotheses.
 Note that one can even ask  if the smoothness of $f$ and the assumption that 
$\btransnormal \in C^{2}[a,b]$ already imply some property about the  Finsler metric $F$. For example one can ask: \emph{is the Finsler metric $F$ already  Riemannian (or Riemannian in  transversal directions to the singularities) or at least  reversible (in transversal directions to the singularities) when the function  $\btransnormal$  is  $C^{2}$?} 
In particular, it would be natural to try to   establish an analogy between  this question and the well 
 known fact that if $\exp_p$ is  $C^{2}$  at zero then  $F$ is a Riemannian  metric. 
\end{remark}

In order to prove the above proposition we will use a result about Bott-Morse functions. 
Let $f:M\to \mathbb{R}$ be a smooth function. As usual we can define the Hessian of $f$ at a critical point $p\in M$ 
as the symmetric linear operator $\mathrm{Hess}f_{p}:T_{p}M\times T_{p}M\to \mathbb{R}$ defined by    
 $\mathrm{Hess} f_{p}(v,w)=\vec{v}_{p}\vec{w} f$, where $\vec{v}, \vec{w}$ are extentions of $v$ and $v$, resp.  
Let $Cr(f)$ denote the critical level set of $f$. Recall that
$f$ is called a \emph{Morse-Bott function} if $Cr(f)$ is union of connected submanifolds and 
the $\mathrm{kern}\, \mathrm{Hess} f$ of singular points  coincides with the tangent spaces of $Cr(f)$.
In particular if $g$ is some Riemannian metric on $M$ and $S$ is a submanifold normal to 
$Cr(f)$ at $p$ then $\mathrm{Hess}_{p}$ restricted to $T_{p}S$ turns to
be non degenerate.

The next strong result stresses the  relation between Bott-Morse functions and (Riemannian) transnormal functions.

\begin{theorem}[\cite{qian2015isoparametric}]
\label{thm-morse-bott}
	Let $M$ be a compact smooth manifold, and $f:M\to \mathbb{R}$ a \textit{Morse-Bott} function with $Cr(f)=M_+\sqcup M_-$, where $M_+$ and $M_-$ are both closed connected submanifolds of codimensions bigger than 1. Then there exists a Riemannian metric on $M$ so that $f$ is transnormal. In fact, the metric can be chosen so that $M_+$ and $M_-$ are both totally geodesics. 
\end{theorem}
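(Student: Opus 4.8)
The plan is to realize $f$ as the ``radial'' function of a metric built by gluing two tubular models to an interpolating cylinder. The key observation is that transnormality of $f$ for a Riemannian $F=\sqrt{\metric}$ means exactly $\|\nabla f\|^2=\btransnormal(f)$, and that this holds \emph{automatically} for any metric of the warped form $\metric=dr^{2}+\metric_{r}$, where $r$ is an arc‑length parameter and $\metric_{r}$ is a smooth family of metrics on the level sets: indeed $\nabla r=\partial_{r}$ with $\|\partial_r\|=1$, so if $f=f(r)$ with $\frac{df}{dr}=\sqrt{\btransnormal(f)}$ then $\|\nabla f\|^{2}=(f')^{2}=\btransnormal(f)$ everywhere, and $\partial_r$ is orthogonal to every level set. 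Thus the \emph{entire} difficulty is making such a metric globally smooth, the trouble being concentrated at the two ends $M_{\pm}$, where the level sets collapse onto the critical submanifolds. Accordingly I would first fix a smooth profile $\btransnormal:[a,b]\to[0,\infty)$, positive on $(a,b)$, with $\btransnormal(s)=4(s-a)$ near $a$ and $\btransnormal(s)=4(b-s)$ near $b$, and set $r=\int_{a}^{f}\frac{ds}{\sqrt{\btransnormal(s)}}$; since $\btransnormal$ vanishes only linearly, this integral converges at both ends, so $M_{\pm}$ lie at finite distance.

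Next I would construct the local models near the extrema. Because $M_{-}$ is the minimum set and $f$ is Morse--Bott, the Morse--Bott Lemma provides a tubular neighborhood diffeomorphic to the normal disk bundle $D(\nu M_{-})$ on which $f-a$ is a fiberwise positive-definite quadratic form. Choosing a connection on $\nu M_{-}$, a fiber metric, and a metric on $M_{-}$, I assemble the associated connection metric on the total space and rescale fiberwise so that $f-a=\rho_{-}^{2}$ with $\rho_{-}=\dist(\cdot,M_{-})$. For this metric the zero section $M_{-}$ is totally geodesic, the unit-speed radial curves are geodesics orthogonal to the levels, and $\|\nabla f\|=2\rho_{-}$, so $\|\nabla f\|^{2}=4(f-a)$ on the collar, matching the chosen profile. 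The symmetric construction near $M_{+}$ gives $f=b-\rho_{+}^{2}$, $\|\nabla f\|^{2}=4(b-f)$, with $M_{+}$ totally geodesic. Here the hypothesis $\codim M_{\pm}>1$ is used: it guarantees that the normal sphere bundles $S(\nu M_{\pm})$ are connected, so that they can match the single connected regular level set.

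It remains to fill in the middle. By Morse--Bott theory, together with the assumptions that $a,b$ are the only singular values and the level sets are connected, $M\setminus(M_{-}\cup M_{+})$ is diffeomorphic to $(a,b)\times L$ for one connected regular level set $L$, with $f$ the projection, and near the ends this identification restricts to the two collars $S(\nu M_{-})$ and $S(\nu M_{+})$ of the previous step. On this cylinder I define $\metric=dr^{2}+\metric_{r}$ by taking $\metric_{r}$ equal to the induced level-set metrics of the tubular models on the two collars, and on the interior interpolating between the $M_{-}$-family and the $M_{+}$-family by a convex combination governed by a cutoff in $r$. This is legitimate because the Riemannian metrics on the fixed manifold $L$ form a convex cone, and the interpolation is carried out strictly away from $r=0$ and $r=T$, so it leaves the collars untouched. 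The resulting $\metric$ therefore coincides with the smooth tubular models near $M_{\pm}$ and is manifestly smooth in the interior, hence is a genuine smooth metric on all of $M$; by construction $f$ is transnormal and $M_{\pm}$ are totally geodesic.

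The main obstacle is the global smoothness across the collapsing ends, and two points must be handled with care there. First, the two a priori different bundle trivializations near $M_{-}$ and $M_{+}$ (sphere bundles over different bases, both diffeomorphic to $L$) must be reconciled by the interpolation without corrupting the bundle-collapse structure; this is why the interpolation is confined to the interior and the collars are kept rigid. Second, one must verify that the radial profile, vanishing like $r$ with unit derivative at each end, yields a metric extending smoothly over the zero sections $M_{\pm}$ --- this is precisely the classical smoothness criterion for a metric on the total space of a vector bundle degenerating at the zero section, and checking it at both ends while keeping $\partial_{r}$ orthogonal to the levels is the technical crux of the argument.
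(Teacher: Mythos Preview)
The paper does not contain its own proof of this theorem: it is quoted verbatim from Qian--Tang \cite{qian2015isoparametric} and used as a black box in the proof of Proposition~\ref{transnormal-SRF}. So there is nothing in the present paper to compare your argument against.

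That said, your sketch is essentially the Qian--Tang construction and is sound in outline. One point to tighten: in your ``fill in the middle'' step you invoke ``the assumptions that $a,b$ are the only singular values and the level sets are connected,'' but connectedness of regular level sets is \emph{not} a hypothesis of the theorem---it must be deduced. The argument is that $M$ is connected (forced by $M_{\pm}$ being connected and each component of $M$ having to meet both), and then Morse--Bott theory with only two critical values shows every regular level is diffeomorphic, via the gradient flow, to the normal sphere bundle $S(\nu M_{-})$, which is connected precisely because $\codim M_{-}>1$. You use this codimension hypothesis correctly elsewhere, so this is only a matter of placing the observation earlier. The remaining delicate point you identify---smooth extension of the warped metric across the zero sections with the correct radial profile---is indeed the crux, and your normalization $f-a=\rho_{-}^{2}$ via the Morse--Bott lemma is the right way to arrange it.
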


Our goal is to check that the $F$-transnormal function that satisfies the hyphothesis of Proposition \ref{transnormal-SRF} is a Bott-Morse function. Once we have proved this, our result will follow directly  from Theorem \ref{thm-morse-bott} and Wang \cite{QMWang}.

We start by recalling the definition of Finslerian $\mathrm{Hess}^{F}$ on a Finsler manifold $(M,F)$ on non singular values of $f$. 

\begin{definition}\label{hes.reg}
	Let $f:(M,F)\to \mathbb{R} $ be a smooth function on a 
	Finsler manifold $M$ and $ U=\{x\in M, \,  d f_x\neq0\}$. 
	We define $\mathrm{Hess}^{F} f$   on $U$ as 
	\begin{equation*}
	\begin{aligned}
	\mathrm{Hess}^{F} f:&\mathfrak{X}(\mathcal{U})\times \mathfrak{X}(\mathcal{U})\to\mathbb{R}\\
	&(Y,X)\mapsto g_{\nabla f}(\nabla_{Y}^{\nabla f}\nabla f,X)
	\end{aligned}
	\end{equation*}
\end{definition}
\begin{lemma}
\label{lemma-hessf-1}
		Let $f:(M,F)\to \mathbb{R} $ be a smooth function on a Finsler manifold $M$ and $\mathcal{U}=\{x\in M , \, d f_x\neq0\}$. Then 
	$	\mathrm{Hess}^{F}_{x}(Y,X)=Y(X(f))_{x} -df_{x}(\nabla_Y^{\nabla f}X).$
	\end{lemma}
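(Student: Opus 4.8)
\textbf{Proof plan for Lemma \ref{lemma-hessf-1}.}
The plan is to use the defining property of the Chern connection $\nabla^{\nabla f}$, together with the fact that the Cartan tensor terms vanish because one of their entries is the reference vector $\nabla f$ itself. First I would start from the definition $\mathrm{Hess}^{F}_{x}(Y,X)=g_{\nabla f}(\nabla^{\nabla f}_{Y}\nabla f, X)$. Since the metric $g_{\nabla f}$ is $\nabla^{\nabla f}$-``almost compatible'' — property (2) of the Chern connection reads $Y\cdot g_{\nabla f}(\nabla f, X)=g_{\nabla f}(\nabla^{\nabla f}_{Y}\nabla f, X)+g_{\nabla f}(\nabla f, \nabla^{\nabla f}_{Y}X)+2C_{\nabla f}(\nabla^{\nabla f}_{Y}\nabla f, \nabla f, X)$ — and the Cartan term vanishes by \eqref{eq-properties-Cartan-tensor} (the middle slot is $\nabla f$, the reference vector), I get
\begin{equation*}
g_{\nabla f}(\nabla^{\nabla f}_{Y}\nabla f, X)=Y\cdot g_{\nabla f}(\nabla f, X)-g_{\nabla f}(\nabla f, \nabla^{\nabla f}_{Y}X).
\end{equation*}

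Next I would rewrite both remaining terms using the definition of the gradient, $df(\cdot)=g_{\nabla f}(\nabla f,\cdot)$. This immediately gives $g_{\nabla f}(\nabla f, X)=df(X)=X(f)$, so $Y\cdot g_{\nabla f}(\nabla f, X)=Y(X(f))$; and likewise $g_{\nabla f}(\nabla f,\nabla^{\nabla f}_{Y}X)=df(\nabla^{\nabla f}_{Y}X)$. Substituting these two identities into the displayed equation yields exactly
\begin{equation*}
\mathrm{Hess}^{F}_{x}(Y,X)=Y(X(f))_{x}-df_{x}(\nabla^{\nabla f}_{Y}X),
\end{equation*}
as claimed.

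The only subtlety — and the one point that deserves a line of care — is that the Chern connection property (2) is stated for the frozen metric $g_{V}$ with $V$ a fixed nonvanishing vector field, and here $V=\nabla f$, which is indeed a smooth nonvanishing vector field on $\mathcal{U}$ since $df_x\neq 0$ there; so the formula applies verbatim with $V=\nabla f$. One should also note that the expression $Y(X(f))$ and $df(\nabla^{\nabla f}_{Y}X)$ are evaluated pointwise at $x$, and the identity is independent of the extensions of the tangent vectors because the right-hand side only involves the $1$-jet of $X$ and $Y$ through $\nabla^{\nabla f}_{Y}X$, mirroring the usual Riemannian Hessian formula $\mathrm{Hess}f(Y,X)=Y(Xf)-df(\nabla_Y X)$. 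I do not expect any real obstacle here; the computation is a direct specialization of the Koszul-type identity for the Chern connection, and the vanishing of the Cartan terms is the only Finsler-specific ingredient.
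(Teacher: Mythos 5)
Your argument is correct and is essentially identical to the paper's proof: both expand $Y(X(f))=Y\big(g_{\nabla f}(\nabla f,X)\big)$ via the almost $g$-compatibility of the Chern connection, kill the Cartan term $C_{\nabla f}(\nabla^{\nabla f}_{Y}\nabla f,\nabla f,X)$ using Eq.~(\ref{eq-properties-Cartan-tensor}), and conclude with the definition of the gradient. Your remarks on why $\nabla f$ is an admissible reference vector field on $\mathcal{U}$ are a welcome (if minor) addition of care.
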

	\begin{proof}
		By the almost $g$-compatibility of the connection and the definition of the gradient we have 
		\begin{eqnarray*}
		Y(X(f))& = & Y(df X)  =  Y (g_{\nabla f}(\nabla f,X)) \\
					& = & g_{\nabla f}(\nabla_{Y}^{\nabla f}\nabla f,X)+g_{\nabla f}(\nabla_{Y}^{\nabla f}X,\nabla f)\\
					&+ & 2C_{\nabla f}(\nabla_{Y}^{\nabla f}\nabla f,\nabla f,X).
		\end{eqnarray*}
		From Eq.~(\ref{eq-properties-Cartan-tensor}) 
		we conclude that $C_{\nabla f}(\nabla_{Y}^{\nabla f}\nabla f,\nabla f,X)=0$. This
		fact and the definition of gradient imply the lemma. 
	\end{proof}

\begin{lemma}
\label{lemma-hessf-2}
	Let $f:(M,F)\to \mathbb{R} $ be a smooth function which is $F$-transnormal with 
	$F^2(\nabla f)=\mathfrak{b}\circ f$ and $U=\{x\in
	M , \, d f_x\neq0\}$. Then on $U$ we have 
	$$  \mathrm{Hess}^{F} f(\nabla f,\nabla f)=\frac{1}{2}\mathfrak{b}'(f)\mathfrak{b}(f).$$ 
	In particular, one can write $ \mathrm{Hess}^{F} f(\frac{\nabla f}{F(\nabla f)},\frac{\nabla f}{F(\nabla f)})=\mathfrak{b}'(f).$
\end{lemma}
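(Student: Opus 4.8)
The plan is to compute $\mathrm{Hess}^{F} f(\nabla f,\nabla f)$ directly using Lemma~\ref{lemma-hessf-1}, which tells us that $\mathrm{Hess}^{F}_{x}(Y,X)=Y(X(f))_{x}-df_{x}(\nabla_{Y}^{\nabla f}X)$. Setting $Y=X=\nabla f$, we must evaluate $\nabla f\big(\nabla f(f)\big)$ and $df(\nabla_{\nabla f}^{\nabla f}\nabla f)$.

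First I would observe that $\nabla f(f)=df(\nabla f)=g_{\nabla f}(\nabla f,\nabla f)=F^{2}(\nabla f)=\mathfrak{b}\circ f$ by Lemma~\ref{lemma-properties-fundamental-tensor}(b) and the transnormality hypothesis. Therefore
\begin{equation*}
\nabla f\big(\nabla f(f)\big)=\nabla f\big(\mathfrak{b}\circ f\big)=\mathfrak{b}'(f)\,df(\nabla f)=\mathfrak{b}'(f)\,\mathfrak{b}(f).
\end{equation*}
Next, for the second term, the key point is that the integral curves of $\nabla f$, after arc-length reparametrization, are geodesics (this is the content of the $f$-segment discussion following Proposition~\ref{proposition-forward-equidistant}, and can also be seen from Lemma~\ref{lemma-transnormal-RF2}). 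Writing $\xi=\nabla f/F(\nabla f)$, which is a unit vector field whose integral curves are geodesics, we get $\nabla_{\xi}^{\nabla f}\xi=0$, hence by homogeneity $\nabla_{\nabla f}^{\nabla f}\nabla f=\nabla_{\nabla f}^{\nabla f}\big(F(\nabla f)\,\xi\big)=\nabla f\big(F(\nabla f)\big)\,\xi+F(\nabla f)\nabla_{\nabla f}^{\nabla f}\xi=\nabla f\big(F(\nabla f)\big)\,\xi$, since $\nabla_{\nabla f}^{\nabla f}\xi=F(\nabla f)\nabla_{\xi}^{\nabla f}\xi=0$. Then $df\big(\nabla_{\nabla f}^{\nabla f}\nabla f\big)=\nabla f\big(F(\nabla f)\big)\,df(\xi)=\nabla f\big(F(\nabla f)\big)\,F(\nabla f)=\tfrac12\,\nabla f\big(F^{2}(\nabla f)\big)=\tfrac12\mathfrak{b}'(f)\mathfrak{b}(f)$, using $df(\xi)=g_{\nabla f}(\nabla f,\xi)/F(\nabla f)\cdot F(\nabla f)=F(\nabla f)$ and the same chain-rule computation as above. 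Subtracting, $\mathrm{Hess}^{F} f(\nabla f,\nabla f)=\mathfrak{b}'(f)\mathfrak{b}(f)-\tfrac12\mathfrak{b}'(f)\mathfrak{b}(f)=\tfrac12\mathfrak{b}'(f)\mathfrak{b}(f)$, as claimed. The final formula in the statement then follows by homogeneity of $\mathrm{Hess}^{F}$ in the first two entries (using Lemma~\ref{lemma-properties-fundamental-tensor}(a) for $g_{\lambda v}=g_{v}$ and the scaling of $\nabla_{Y}^{\nabla f}\nabla f$), dividing through by $F^{2}(\nabla f)=\mathfrak{b}(f)$.

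The main obstacle I anticipate is not any single deep fact but keeping the Chern-connection bookkeeping honest: one must be careful that $\nabla^{\nabla f}$ is the Chern connection with reference vector $\nabla f$, that $C_{\nabla f}(\nabla f,\cdot,\cdot)=0$ is genuinely what lets the almost-compatibility collapse to the Riemannian-looking identities, and that the geodesic property of the $\nabla f$-flow is being invoked with the correct reference vector (so that $\nabla_{\xi}^{\xi}\xi=\nabla_{\xi}^{\nabla f}\xi$, which again uses $g_{\lambda v}=g_{v}$ and Lemma~\ref{lemma-transnormal-RF2}). Once these reference-vector subtleties are pinned down, the computation is the short chain-rule argument above.
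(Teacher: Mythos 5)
Your computation of the main identity is correct, but it takes a genuinely different (and heavier) route than the paper. The paper works directly from Definition~\ref{hes.reg}: $\mathrm{Hess}^{F}f(\nabla f,\nabla f)=g_{\nabla f}(\nabla^{\nabla f}_{\nabla f}\nabla f,\nabla f)$, and applies almost $g$-compatibility once, with the Cartan term killed by Eq.~(\ref{eq-properties-Cartan-tensor}), to get $\tfrac12\,\nabla f\big(F^{2}(\nabla f)\big)=\tfrac12\mathfrak{b}'(f)\mathfrak{b}(f)$ in one line. You instead go through Lemma~\ref{lemma-hessf-1} and then handle the correction term $df(\nabla^{\nabla f}_{\nabla f}\nabla f)$ by invoking the geodesic property of the normalized gradient flow ($f$-segments, Proposition~\ref{proposition-forward-equidistant}), together with the reference-vector homogeneity $\nabla^{\lambda V}=\nabla^{V}$. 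This is legitimate --- all the facts you use are established in the paper and your bookkeeping with the reference vector is sound --- but note that the geodesic property is itself proved by essentially the same almost-compatibility computation (showing $\nabla^{\nabla f}_{\nabla f}\nabla f$ is proportional to $\nabla f$), so you are importing a global-sounding fact to do a purely pointwise computation; the term $df(\nabla^{\nabla f}_{\nabla f}\nabla f)=g_{\nabla f}(\nabla^{\nabla f}_{\nabla f}\nabla f,\nabla f)$ can be evaluated directly exactly as the paper does, with no appeal to geodesics.

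One genuine slip: your last sentence claims the ``in particular'' formula follows by dividing $\tfrac12\mathfrak{b}'(f)\mathfrak{b}(f)$ by $F^{2}(\nabla f)=\mathfrak{b}(f)$, but that division yields $\tfrac12\mathfrak{b}'(f)$, not $\mathfrak{b}'(f)$. In fact the normalized value $\tfrac12\mathfrak{b}'$ is what the paper itself uses later in Eq.~(\ref{eq-hess-alpha}), so the displayed ``in particular'' in the lemma statement appears to carry a spurious factor of $2$; you should have flagged the mismatch rather than asserting that the stated formula follows from your computation.
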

\begin{proof}
	The definition of transnormal function, and Eq.~(\ref{eq-properties-Cartan-tensor}) imply

	\begin{eqnarray*}
	\mathrm{Hess}^{F} f(\nabla f,\nabla f)& =& g_{\nabla f}(\nabla^{\nabla f}_{\nabla f}\nabla f,\nabla f)  =  \frac{1}{2}\nabla f \big(g_{\nabla f}(\nabla f,\nabla f)\big)\\
	& = & \frac{\nabla f}{2} (F^2(\nabla f)) =  \frac{\nabla f}{2}(\mathfrak{b}\circ f)\\
	& = & \frac{1}{2} \mathfrak{b}'(f)df(\nabla f) =  \frac{1}{2}\mathfrak{b}'(f)g_{\nabla f}(\nabla f,\nabla f)\\
	& = & \frac{1}{2} \mathfrak{b}'(f)F^2(\nabla f) =  \frac{1}{2} \mathfrak{b}'(f)\mathfrak{b}(f). 
	\end{eqnarray*}
	\end{proof}

Consider an arbitrary metric $g$ and a slice $S$ orthogonal to $f^{-1}(b)$ at $p$. Let $X$ be a vector of 
$T_{p}S$ and consider the (only) vector $V$ at $\nu_{q}(f^{-1}(b))$, that projects to $X$, i.e., so that
$V=X+X^{v}$ where $X^{v}$ is tangent to $f^{-1}(b)$, see  \cite[Lemma 2.9]{AlexAlvesJavaloyes}. 
 Let $\gamma$ be the unit speed geodesic that contains the integral lines of $\nabla f$   so that
$p=\lambda\gamma'(0)=V$, for the apropriate $\lambda\neq 0$. Lemmas \ref{lemma-hessf-1} and \ref{lemma-hessf-2} imply 
\begin{equation}
\label{eq-hess-alpha}
\mathrm{Hess}f(\gamma'(0),\gamma'(0)) =\frac{1}{2}\mathfrak{b}'(b).
\end{equation}

 Eq. \eqref{eq-hess-alpha}  and the fact that $T_{p}f^{-1}(b)=\ker \mathrm{Hess}f_{p}$ imply that 
$$\mathrm{Hess}f_{p}(X,X) =\frac{\lambda^{2}}{2}\mathfrak{b}'(b)\neq 0.$$
The last equation and the arbitrary choice of $X\in T_{p}S$ 
imply   $\mathrm{Hess}f_{p}$ is non degenerate  at $T_{p}S$. A similar proof is valid for $f^{-1}(a)$ and hence 
$f$ is a Morse Bott-function, as we wanted to prove. 


\bibliographystyle{amsplain}

\begin{thebibliography}{10}


\bibitem{AlexAlvesJavaloyes} M M. Alexandrino, B. O. Alves, M. A. Javaloyes,
\emph{On singular Finsler foliation}. Accepted at Annali di Matematica Pura ed Applicata  
DOI 10.1007/s10231-018-0769-1 (2018).
                                    
\bibitem{AlexBettiol} M. M. Alexandrino, R. Bettiol, \emph{Lie groups and geometric aspects of isometric actions}, Springer Verlag (2015) 
ISBN 978-3-319-16612-4.

\bibitem{AlexRafaelDirk} M. M. Alexandrino, R. Briquet , D. T\"{o}ben,  \emph{Progress in the theory of singular Riemannian foliations}. 
Differential Geometry and Its Applications, \textbf{31} (2013) 248--267.

\bibitem{AlvesJavaloyes} B. O. Alves, M. A. Javaloyes,
\emph{A note on the existence of tubular neighbourhoods on Finsler manifolds and minimization of orthogonal geodesics to a submanifold},
Preprint (2017) 	arXiv:1710.01699; accepted at Proceedings of the AMS.

\bibitem{Dehkordi} H. R. Dehkordi, \emph{Finsler Transnormal functions and singular foliations of codimension 1}, 
PhD thesis at IME University of S\~ao paulo (2018).


\bibitem{He-Yin-Shen}
Q. He, S. T Yin and  Y. Shen, \emph{Isoparametric Hypersurfaces in Minkowski spaces},
Differential Geometry and its Applications \textbf{47} (2016) 133--158.

\bibitem{Javaloyes-Soares}
Javaloyes, M.A., Soares, B.: Geodesics and Jacobi fields of pseudo-Finsler manifolds. Publ.Math. Debrecen
87(5) (2015) 57--78.

\bibitem{Javaloyes-Sanchez} Javaloyes, M.A., S\'{a}nchez, M.: \emph{On the definition and examples of Finsler metrics.} Ann. Scuola Norm.
Sup. Pisa Cl. Sci. 13(5),  (2014) 813--858.


\bibitem{Krantz-Parks}
S. G. Krantz, H. R. Parks,
\emph{A primer of Real Analytic functions},
Birk\"{a}user Advanced texts, Second Edition (2002)

\bibitem{I-R-Peter} I. R. Peter 
\emph{On the Morse Index Theorem where the ends are sub-manifolds in Finsler geometry}
Houston Journal of Mathematics 32 (4) (2006) 995--1009.


\bibitem{Markvorsen}
S. Markvorsen,
\emph{A Finsler geodesic spray paradigm for wildfire spread modelling}
Nonlinear Analysis, Real World Applications \textbf{28} (2016) 208--228. 

\bibitem{qian2015isoparametric} C. Qian, Z. Tang, 
\emph{Isoparametric functions on exotic spheres}
Advances in Mathematics \textbf{272} (2015) 611--629.

\bibitem{Robles} C. Robles, \emph{Geodesics in Randers spaces of constant curvature}, 
Transactions of the American Mathematical Society, v 359 \textbf{4} (2007) 1633--1651. 


\bibitem{Th} G. Thorbergsson,
\textit{A survey on isoparametric hypersurfaces and their generalizations},
Handbook of differential geometry, \textbf{1}, Elsevier Science, 2000.


\bibitem{QMWang}
Q.M. Wang, \emph{Isoparametric functions on Riemannian Manifolds.I}
Math. Ann. \textbf{277} (1987) 639--646.

\bibitem{Zhongmin-Shen}
Zhongmin Shen, \emph{Lectures on Finsler Geometry}
World Scientific 


\end{thebibliography}

\end{document}